\theoremstyle{plain}
\newtheorem{theorem}{Theorem}
\newtheorem{lemma}[theorem]{Lemma}
\newtheorem{proposition}[theorem]{Proposition}
\newtheorem{corollary}[theorem]{Corollary}
\theoremstyle{definition}
\newtheorem{definition}[theorem]{Definition}
\theoremstyle{remark}
\newtheorem{remark}[theorem]{Remark}
\numberwithin{equation}{section}
\numberwithin{theorem}{section}
\newcommand{\longto}{\longrightarrow}
\newcommand{\CC}{\mathbb{C}}
\newcommand{\FF}{\mathbb{F}}
\newcommand{\HH}{\mathbb{H}}
\newcommand{\QQ}{\mathbb{Q}}
\newcommand{\ZZ}{\mathbb{Z}}
\newcommand{\cF}{\mathcal{F}}
\newcommand{\cH}{\mathcal{H}}
\newcommand{\diag}{\operatorname{diag}}
\newcommand{\fp}{\FF_p}
\newcommand{\GL}{\operatorname{GL}}
\newcommand{\Gr}{\operatorname{Gr}}
\newcommand{\GSp}{\operatorname{GSp}}
\newcommand{\im}{\operatorname{Im}}
\newcommand{\Mat}{\operatorname{Mat}}
\newcommand{\one}{\mathbf{1}}
\newcommand{\qbinom}{\binom}
\newcommand{\Sym}{\operatorname{Sym}}
\newcommand{\Sp}{\operatorname{Sp}}
\newcommand{\T}{\mathcal{T}}
\newcommand{\Tr}{\operatorname{Tr}}
\newcommand{\tT}{\widetilde{\T}}
\title{Hecke eigenvalues of Siegel modular forms of ``different weights''}
\author{
  Alexandru Ghitza and Robert Sayer
}
\begin{document}

\maketitle

\begin{abstract}
  Given two Siegel eigenforms of different weights, we determine explicit
  sets of Hecke eigenvalues for the two forms that must be distinct.  In
  degree two, and under some additional conditions, we determine explicit sets
  of Fourier coefficients of the two forms that must be distinct.

  \emph{Keywords:} Siegel modular forms; Hecke operators; Fourier
  coefficients.
\end{abstract}

\section{Introduction}

Our story revolves around the

\begin{center}
  {\bf Motivating question:} How similar can two Hecke eigenforms of
    different weights be?
\end{center}

The case of elliptic modular forms has been investigated in some depth, both
in characteristic zero and in positive characteristic; see~\cite{Murty},
\cite{Kohnen}, \cite{Ghitza}, \cite{ChowGhitza}.  Here the eigenforms are
compared via the Fourier coefficients, or via the Hecke eigenvalues, which are
equivalent (after normalization).

In this paper we treat the case of Siegel modular forms in characteristic
zero.  While this is motivated by the elliptic case, the richness of the
Siegel theory makes for a significantly more diverse picture.  On one hand,
Fourier coefficients encode more information than the Hecke eigenvalues: while
the Fourier expansion completely determines the form for a fixed weight and
level (by a generalization of the $q$-expansion
principle~\cite[Proposition~V.1.8]{FaltingsChai}), the set of Hecke
eigenvalues does not determine the form--there exist distinct eigenforms of
the same level and weight that have the same Hecke eigenvalues for $p$ not
dividing the level (see the introduction of~\cite{Schulze-Pillot}).  On the
other hand, even when staying entirely on the local Hecke algebra side, the
structure is delicate enough that there are several choices of Hecke operators
to consider: the ``standard'' generators $T(p)$, $T_1(p^2),\ldots,T_n(p^2)$
described by Andrianov~\cite{AndrianovZhuravlev}; the ``averaged'' generators
$\mathcal{T}(p)$, $\tT_1(p^2),\ldots,\tT_n(p^2)$ defined and studied by Hafner
and Walling~\cite{HafnerWalling}; or the operators $T(p^r)$ that appear more
naturally in geometric situations such as the work of Bergstr\"om, Faber and
van der Geer~\cite{BergstromFaberGeer-deg3}.

Our investigation touches upon most of these aspects, with varying degrees of
success and generality.  The most direct way of approaching our
{\bf Motivating question} is via the operator $T_n(p^2)$: this acts on a
Siegel modular form of level coprime to $p$ and weight given by
$(\lambda_1\geq\ldots\geq\lambda_n)$ as multiplication by the scalar
$p^{\sum\lambda_j-n(n+1)}$.  It follows that two forms that have the same
eigenvalue for $T_n(p^2)$ must have the same $\sum\lambda_j$.  
The contrapositive version of this statement is:
if for two forms $F$ and $G$ of level coprime to $p$ the corresponding
integers $\sum\lambda_j$ differ (this is the sense in which the ``different
weights'' of the title should be understood), then their eigenvalues under the
operator $T_n(p^2)$ must also differ.  This basic observation forms the basic
leitmotif of the paper, and the variations thereupon may be described
informally as follows:
\begin{itemize}
  \item in Section~\ref{sect:tjp2}, we show that the eigenvalues of $F$ and
    $G$ for at least one of the operators $T(p)$, $T(p^2)$,
    $T_1(p^2),\ldots,T_{n-1}(p^2)$ must be distinct;
  \item in Section~\ref{sect:tpr}, we consider the special case of degree $2$
    and show that the eigenvalues of $F$ and $G$ for at least one of the
    operators $T(p^r)$, $r=1,\ldots,6$, must be distinct;
  \item in Section~\ref{sect:fourier}, we show that, subject to a number of
    conditions, there exists a Fourier expansion index $S$ of
    explicitly-bounded determinant such that the coefficient of $F$
    at $S$ is distinct from the coefficient of $G$ at $S$.
\end{itemize}

These results are preceded by a review of the basic theory of Hecke operators
on Siegel modular forms in Section~\ref{sect:review}, and by the derivation of
a formula for $T(p^2)$ (inspired by work of Hafner-Walling) in
Section~\ref{sect:tp2}.

{\bf Acknowledgements:} We thank G. van der Geer, M. Raum and R.
Schulze-Pillot for answering our various questions.  The first author was
supported by Discovery Grant DP120101942 from the Australian Research Council.

\section{Hecke operators on Siegel modular forms}
\label{sect:review}

We gather here some definitions and basic results on Siegel modular forms and
their Hecke action.  For more leisurely expositions of various parts of this
material, the reader is invited to consult~\cite{Geer} or~\cite{Freitag}.

\subsection{Siegel modular forms}
Let $n\geq 1$ be an integer and let $R$ be a commutative ring. 
Consider the $R$-module $R^{2n}$
with generators $e_1,\ldots,e_n$, $f_1,\ldots,f_n$, equipped with a
\emph{symplectic form} $\langle\cdot, \cdot \rangle$ which is defined on the
generators by the rules 
\begin{equation*}
  \langle e_i,e_j \rangle = 0, \quad \langle f_i,f_j
  \rangle = 0 \quad \mbox{and} \quad \langle e_i,f_j \rangle =
  \delta_{ij}~\mbox{(Kronecker's delta)} 
\end{equation*}
and extended by $R$-bilinearity.

\begin{definition}
The \emph{symplectic group} $\Sp(2n,R)$ is the
automorphism group of the pair $(R^{2n}, \langle\cdot, \cdot\rangle)$:
\begin{equation*} 
  \Sp(2n,R)=\left\{\alpha \in \GL(R^{2n})\mid
    \langle\alpha(u), \alpha(v)\rangle=\langle u, v\rangle
    \text{ for all }u,v\in R^{2n}\right\}.
\end{equation*}
We also work with the \emph{group of symplectic similitudes}
\begin{align*} 
  \GSp(2n,R)=\{\alpha \in \GL(R^{2n})\mid 
    & \text{ there exists }
    \eta(\alpha)\in R^\times\text{ such that }\\
    &\langle\alpha(u), \alpha(v)\rangle=\eta(\alpha)\langle u, v\rangle
    \text{ for all }u,v\in R^{2n}\}.
\end{align*}
\end{definition}

With respect to the basis comprising the $e_i$ and $f_i$, the elements of
$\Sp(2n,R)$ and $\GSp(2n,R)$ are represented by matrices
\begin{align*}
  \Sp(2n, R) &= \left\{\gamma\in\GL(2n, R)\mid \gamma J\gamma^t = J\right\}\\
  \GSp(2n, R) &= \left\{\gamma\in\GL(2n, R)\mid \gamma J\gamma^t =
    \eta(\gamma)J\text{ for some }\eta(\gamma)\in R^\times\right\},\\
  \intertext{where}
  J&=\begin{pmatrix}
    0 & I_n\\ -I_n & 0
  \end{pmatrix}.
\end{align*}

The \emph{Siegel modular group} of degree $n$ is $\Gamma=\Sp(2n,\ZZ)$.
We shall be principally concerned with a family of so-called \emph{congruence
  subgroups} of $\Gamma$.

\begin{definition}
Let $N\geq 1$ be an integer.  The \emph{congruence subgroup} $\Gamma_0(N)$ of
level $N$ is
\begin{equation*}
\Gamma_0(N)=\left\{
  \gamma=\begin{pmatrix}
 A & B \\ C & D
\end{pmatrix}\in \Gamma~\Big|~C\equiv 0 \pmod{N}\right\}.
\end{equation*}
\end{definition}

\begin{definition} 
Let $n\geq 1$ be an integer.  The \emph{Siegel upper half space}
$\HH^n$ is the set of symmetric $n\times n$ complex matrices having
positive-definite imaginary part:
\begin{equation*}
  \HH^n=\left\{z\in
  \Mat(n,\CC)\mid z^t=z\text{ and }
  \im(z)>0\right\}
\end{equation*}
The set $\HH^n$ is an open subset of the complex manifold
$\Mat(n,\CC)\cong \CC^{n^2}$.
\end{definition}

The group $\Gamma$ acts on $\HH^n$ by generalized M\"obius transformations:
\begin{equation*}
  \begin{pmatrix}
 A & B \\ C & D
\end{pmatrix}\cdot z := (Az + B)(Cz + D)^{-1}.
\end{equation*}

\begin{definition} 
  Let $n\geq 2$ and $N\geq 1$ be integers and let $\rho\colon\GL_n(\CC)\to \GL(V)$
  be a polynomial\footnote{A finite-dimensional representation
    $\rho\colon\GL_n(\CC)\to\GL(V)$ is \emph{polynomial} if it is given by
  polynomial functions in the entries of the matrices in $\GL_n(\CC)$.} 
  representation. A \emph{Siegel modular form of
    degree $n$, weight $\rho$ and level $N$} is a holomorphic function
  $F\colon\HH^{n}\to V$ such that
  \begin{equation*}
  F(\gamma\cdot z)=\rho(Cz+D)F(z)
\end{equation*}
for all $\gamma=\begin{pmatrix}
 A & B \\ C & D 
\end{pmatrix}\in \Gamma_0(N)$ and all $z\in\HH^n$.  The $\CC$-vector space of
all such forms is denoted $M_\rho(\Gamma_0(N))$.
\end{definition}

\begin{remark}
  The case $n=1$ requires an extra condition, holomorphicity at the cusps,
  which is automatically satisfied for $n\geq 2$ (by the K\"ocher principle).
\end{remark}

\begin{remark}
  We may commit various abuses of notation regarding the weight $\rho$ of a
  Siegel modular form:
  \begin{itemize}
    \item Finite-dimensional representations $\rho\colon\GL_n(\CC)\to\GL(V)$ are indexed
by nonincreasing $n$-tuples of integers
$(\lambda_1\geq\lambda_2\geq\ldots\geq\lambda_n)$, and we will refer to such a
tuple as the weight.
\item The special case
$\lambda_1=\lambda_2=\ldots=\lambda_n=k$ corresponds to \emph{scalar-valued}
Siegel modular forms; here we call $k$ the weight.
\end{itemize}
\end{remark}

\subsection{The Hecke algebra}
\label{sect:hecke_algebra}

We recall the construction of some interesting elements of the local Hecke
algebra of $\Sp(2n)$ at a prime $p$; for this, we combine the approach and
notation from~\cite[Section 2]{Gross} and~\cite[Section 16]{Geer}.

Let $G=\Sp(2n, \QQ_p)$ and $K=\Sp(2n, \ZZ_p)$.  The \emph{(local) Hecke
  algebra} $\cH$ is the Hecke algebra of the pair $(G, K)$, i.e.
\begin{equation*}
  \cH = \left\{f\colon G\longto\ZZ\mid f\text{ locally constant, compactly
    supported, bi-$K$-invariant}\right\}.
\end{equation*}
The multiplication is given by convolution of such functions; for all $x\in
G$, we set
\begin{equation*}
  (f\cdot g)(x)=\int_{\gamma\in G} f(\gamma)g(\gamma^{-1}x)\,d\gamma,
\end{equation*}
where $d\gamma$ is the unique Haar measure on $G$ which is normalized so that $K$
has volume $1$.

The prototypical examples of elements of $\cH$ are provided by the
characteristic functions of double cosets $K\gamma K$ with $\gamma\in G$:
\begin{equation*}
  \one_{K\gamma K}(x) = \begin{cases}
    1 & \text{if }x\in K\gamma K\\
    0 & \text{otherwise}.
  \end{cases}
\end{equation*}
In fact, every element of $\cH$ is a finite $\ZZ$-linear combination of
characteristic functions $\one_{K\gamma K}$.

For any $r\geq 1$, consider the set of matrices
\begin{equation*}
  O_n(p^r)=\{\gamma\in\Mat(2n,\CC)\mid \gamma^tJ\gamma=p^rJ\}.
\end{equation*}

\begin{definition}
  \label{def:hecke_ops}
  Set
  \begin{align}
    \label{eq:tp}
    T(p) &=\one_{K\gamma K}\qquad\text{where }
    \gamma=\diag(\overbrace{1,\ldots,1}^n,
    \overbrace{\vphantom{1}p,\ldots,p}^n)\\
    \intertext{and, for $j=0,\ldots,n$:}
    \label{eq:tjp2}
    T_j(p^2) &=\one_{K\gamma K}\qquad\text{where }
    \gamma=\diag(\overbrace{\vphantom{p^2}1,\ldots,1}^{n-j},
    \overbrace{\vphantom{p^2}p,\ldots,p}^j,
    \overbrace{p^2,\ldots,p^2}^{n-j},
    \overbrace{\vphantom{p^2}p,\ldots,p}^j),\\
    \intertext{and finally, for $r\geq 1$:}
    \label{eq:tm}
    T(p^r) &= \sum_{\gamma\in O_n(p^r)} \one_{K\gamma K}
  \end{align}
\end{definition}

(Equation~\eqref{eq:tm} is consistent with Equation~\eqref{eq:tp} when $r=1$.)

The algebra $\cH$ can be written as $\cH=\cH^0\left[1/T_n(p^2)\right]$, where
$\cH^0$ is the subalgebra generated by the characteristic functions of double
cosets of matrices with entries in $\ZZ_p$.  Moreover, a set of generators for
the algebra $\cH^0$ is given by the elements $T(p)$,
$T_1(p^2),T_2(p^2),\ldots,T_n(p^2)$.

\subsection{Action of the Hecke algebra on Siegel modular forms}
Let $F$ be a Siegel modular form of level $\Gamma_0(N)$ and weight $\rho$ given by
$(\lambda_1\geq\ldots\geq\lambda_n)$.  Given $\gamma=\begin{pmatrix}A & B\\C &
  D\end{pmatrix}\in\GSp(2n,\QQ)$ with positive determinant, we set
\begin{equation}
  \label{eq:slash}
  F|_{\gamma,\rho}(z)=
  \eta(\gamma)^{\sum\lambda_j-n(n+1)/2}\rho(Cz+D)^{-1}
  F(\gamma\cdot z).
\end{equation}

Let $p$ be a prime not dividing $N$ and let $K=\Sp(2n,\ZZ_p)$ as in
Section~\ref{sect:hecke_algebra}.
Given a double coset $K\gamma K$ and its decomposition into right cosets
\begin{equation*}
  K\gamma K=\coprod_{i=1}^h K\gamma_i,
\end{equation*}
we set
\begin{equation}
  \label{eq:double_coset_action}
  (K\gamma K)(F)(z) = \sum_{i=1}^h F|_{\gamma_i,\rho}(z).
\end{equation}
The result is a Siegel modular form of the same weight and level as $F$, and 
independent of the choice of double coset representative
$\gamma$ and right coset representatives $\gamma_i$.

Finally, we extend~\eqref{eq:double_coset_action} by $\ZZ$-linearity to an
action of the local Hecke algebra $\mathcal{H}$ on $M_\rho(\Gamma_0(N))$.  In
particular, we can think of the elements $T(p)$, $T_j(p^2)$, $T(p^r)$ from
Definition~\ref{def:hecke_ops} as operators on the space
$M_\rho(\Gamma_0(N))$.

In order to obtain explicit expressions for the action of these Hecke
operators, we need explicit decompositions of the relevant double cosets into
right cosets.  Such decompositions for the generators $T(p)$,
$T_1(p^2),\ldots,T_n(p^2)$ are given\footnote{%
The reader is cautioned that the formula for $T(p)$ appearing
in~\cite[Lemma~III.3.32]{AndrianovZhuravlev} has a small but unfortunate typo: in
Equation~(3.59), $p^2D^*$ should be replaced by $pD^*$; the formula is correct
in~\cite[Lemma~3.3.2]{Andrianov}, the former incarnation of the book.}
in~\cite[Lemma~III.3.32]{AndrianovZhuravlev}.
We will only need the
decomposition of $T_n(p^2)$, which
takes a particularly simple form and leads to the following description:
\begin{lemma}
  \label{lem:tnp2}
  Let $F\in M_\rho(\Gamma_0(N))$, with $\rho$ given by
  $(\lambda_1\geq\ldots\geq\lambda_n)$.  Let $p$ be a prime not dividing $N$.
  Then 
  \begin{equation*}
    T_n(p^2)F = p^{\sum\lambda_j-n(n+1)/2} F.
  \end{equation*}
\end{lemma}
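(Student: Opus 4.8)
The plan is to show that $T_n(p^2)$ is given by a single right coset, so its action is multiplication by an explicit scalar.

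First I would compute the double coset decomposition of $T_n(p^2)$ into right cosets. The defining matrix is $\gamma=\diag(p,\ldots,p,p,\ldots,p)=pI_{2n}$, since in Equation~\eqref{eq:tjp2} the case $j=n$ gives $n-j=0$ copies of $1$ and $p^2$, and $n$ copies of $p$ in each block. Thus $\gamma=pI_{2n}$ is a \emph{scalar} matrix. The key observation is that a scalar matrix is central in $\GL(2n,\QQ_p)$, so it commutes with every element of $K=\Sp(2n,\ZZ_p)$. Consequently the double coset collapses:
\begin{equation*}
  K\gamma K = K(pI_{2n})K = (pI_{2n})KK = (pI_{2n})K = K(pI_{2n}).
\end{equation*}
In other words, the double coset is a single right coset, with $h=1$ and $\gamma_1=pI_{2n}$. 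I should double-check that $pI_{2n}$ indeed lies in $O_n(p^2)$ (equivalently defines a valid element), which is immediate since $(pI_{2n})^tJ(pI_{2n})=p^2J$, so $\eta(\gamma)=p^2$.

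Next I would evaluate the slash action on this single coset using Equation~\eqref{eq:slash}. Writing $\gamma_1=pI_{2n}=\begin{pmatrix}pI_n & 0\\ 0 & pI_n\end{pmatrix}$, we have $C=0$, $D=pI_n$, and $\eta(\gamma_1)=p^2$. The M\"obius action is trivial: $\gamma_1\cdot z = (pI_n z)(pI_n)^{-1}=z$. Therefore
\begin{equation*}
  F|_{\gamma_1,\rho}(z)
  = \eta(\gamma_1)^{\sum\lambda_j-n(n+1)/2}\,\rho(pI_n)^{-1}\,F(z)
  = \left(p^2\right)^{\sum\lambda_j-n(n+1)/2}\rho(pI_n)^{-1}F(z).
\end{equation*}
Since $\rho$ is a polynomial representation of $\GL_n(\CC)$ indexed by $(\lambda_1\geq\ldots\geq\lambda_n)$, the central element $pI_n$ acts by the scalar $p^{\sum\lambda_j}$ (homogeneity of the representation in the determinant), so $\rho(pI_n)^{-1}=p^{-\sum\lambda_j}$. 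Combining the exponents gives $p^{2\sum\lambda_j-n(n+1)}\cdot p^{-\sum\lambda_j}=p^{\sum\lambda_j-n(n+1)/2}$, which by Equation~\eqref{eq:double_coset_action} is exactly the claimed scalar.

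The main obstacle, and the only step requiring genuine care, is justifying that $\rho(pI_n)$ acts as the scalar $p^{\sum\lambda_j}$. This is the standard fact that an irreducible polynomial representation of highest weight $(\lambda_1,\ldots,\lambda_n)$ restricts on the center $\CC^\times\cdot I_n$ to the character $t\mapsto t^{\sum\lambda_j}$; I would cite the indexing convention already fixed in the remarks following the definition of Siegel modular forms, where the tuple $(\lambda_1\geq\ldots\geq\lambda_n)$ labels $\rho$, together with the fact that the scalar-valued case $\lambda_1=\ldots=\lambda_n=k$ recovers the classical $\rho(pI_n)=p^{nk}=p^{\sum\lambda_j}$ as a consistency check. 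Everything else is a direct substitution into the definitions, so the proof is short.
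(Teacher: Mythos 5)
Your approach is exactly the paper's: observe that the representative $pI_{2n}$ of the double coset defining $T_n(p^2)$ is central, so $K(pI_{2n})K=K(pI_{2n})$ is a single right coset, and then substitute into Equation~\eqref{eq:slash} using $\eta(pI_{2n})=p^2$, $\rho(pI_n)=p^{\sum\lambda_j}$ and $(pI_{2n})\cdot z=z$. Your extra care in justifying the central character computation $\rho(pI_n)=p^{\sum\lambda_j}$ is welcome; the paper simply asserts it.

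However, your final exponent manipulation is wrong as written:
\begin{equation*}
  p^{2\sum\lambda_j-n(n+1)}\cdot p^{-\sum\lambda_j}=p^{\sum\lambda_j-n(n+1)},
\end{equation*}
not $p^{\sum\lambda_j-n(n+1)/2}$. What the computation (yours and the paper's) actually proves, given the normalization in Equation~\eqref{eq:slash}, is
\begin{equation*}
  T_n(p^2)F=p^{\sum\lambda_j-n(n+1)}F,
\end{equation*}
and this is the scalar the paper uses everywhere else: the introduction states that $T_n(p^2)$ acts by $p^{\sum\lambda_j-n(n+1)}$, and Equation~\eqref{eq:tp_ap}, which is quoted as the $n=2$ case of this lemma, reads $T_2(p^2)F=p^{\lambda_1+\lambda_2-6}F$, i.e.\ exponent $\sum\lambda_j-n(n+1)$ rather than $\sum\lambda_j-3$. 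So the exponent $n(n+1)/2$ in the printed statement of the lemma is a typo for $n(n+1)$ (or else Equation~\eqref{eq:slash} is misnormalized); you should correct the last equality rather than force it to agree with the printed statement. None of the later arguments are affected, since they only use that the eigenvalue of $T_n(p^2)$ is an explicit nonzero power of $p$ determined by $\sum\lambda_j$.
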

\begin{proof}
  We know that $T_n(p^2)$ is given by the double coset
  \begin{equation*}
    KpI_{2n}K=KpI_{2n}=K\gamma\qquad\text{with }\gamma=
    \begin{pmatrix}
      pI_n & 0\\0 & pI_n
    \end{pmatrix}.
  \end{equation*}
  The claim follows from Equation~\eqref{eq:slash}, since:
  \begin{equation*}
    \eta(pI_{2n})=p^2,\qquad
    \rho(pI_n)=p^{\sum\lambda_j},\qquad\text{and}\qquad
    (pI_n)\cdot z=z.
  \end{equation*}
\end{proof}

\section{A formula for $T(p^2)$}
\label{sect:tp2}

In~\cite{HafnerWalling}, Hafner and Walling introduced a new set of generators
$\T(p)$, $\tT_j(p^2)$ ($j=1,\ldots,n$) for the local Hecke algebra at $p$.
Their motivation was that these generators act on the Fourier expansions of
Siegel modular forms in a much simpler way than the standard Hecke operators.
We are interested in the $\tT_j(p^2)$ because they satisfy the simple
relation~\eqref{eq:tilde_relation} given below; we use this to deduce the
formula~\eqref{eq:main_relation} for $T(p^2)$, which will play a crucial role
in Section~\ref{sect:tjp2}.

\begin{definition}
  \label{def:tilde_t}
  For $n\geq 1$, $j=0,1,\ldots,n$ and $k\geq n+1$, set
  \begin{align*}
    \T(p) &= p^{n(k-n-1)/2}T(p)\\
    \T_j(p^2) &= T_{n-j}(p^2)\\
    \tT_j(p^2) &= p^{j(k-n-1)}\sum_{t=0}^j \qbinom{n-t}{j-t}_p\T_t(p^2)
  \end{align*}
  where
  \begin{equation*}
    \qbinom{m}{\ell}_p=\#\Gr(\ell,m)(\fp)=
    \prod_{i=1}^\ell\frac{p^{m-\ell+i}-1}{p^i-1}
  \end{equation*}
  is the number of $\ell$-dimensional subspaces of $\fp^m$.
\end{definition}

\begin{remark}
  Hafner and Walling~\cite{HafnerWalling} write the operators
  $\T_j(p^2)$, etc., simply as $T_j(p^2)$.  We have preferred to differentiate
  them typographically from the standard generators.  We have also chosen to
  replace the notation $\beta_p(m,\ell)$ from~\cite{HafnerWalling} with the
  more established $q$-binomial coefficient notation $\qbinom{m}{\ell}_p$,
  which has the additional advantage that it is rather suggestive of the properties
  of these numbers that we will exploit below.
\end{remark}

\begin{remark}
Note that
\begin{equation*}
  \tT_n(p^2)=p^{n(k-n-1)}\sum_{t=0}^j T_{n-t}(p^2)=p^{n(k-n-1)}T(p^2).
\end{equation*}
\end{remark}

The operators $\T(p)$ and $\tT_j(p^2)$ satisfy the following relation:
\begin{theorem}[Hafner-Walling {\cite[Proposition 5.1]{HafnerWalling}}]
  \label{thm:tilde_relation}
  \begin{equation}
    \label{eq:tilde_relation}
    \tT_n(p^2)=\T(p)^2-\sum_{j=0}^{n-1}
    p^{k(n-j)+j(j+1)/2-n(n+1)/2} \tT_j(p^2).
  \end{equation}
\end{theorem}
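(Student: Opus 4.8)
The plan is to strip the weight-dependent normalizations out of~\eqref{eq:tilde_relation} and reduce it to a single, weight-independent identity in the commutative Hecke algebra $\cH$, namely the structure-constant formula
\[
  T(p)^2=\sum_{s=0}^{n}\Bigl(\prod_{i=1}^{s}(1+p^{i})\Bigr)\,T_s(p^2)
\]
expressing the square of $T(p)$ in the standard basis $T_0(p^2),\ldots,T_n(p^2)$ spanning the norm-$p^2$ double cosets. First I would rewrite~\eqref{eq:tilde_relation} in the symmetric form $\T(p)^2=\sum_{j=0}^{n}p^{k(n-j)+j(j+1)/2-n(n+1)/2}\tT_j(p^2)$, noting that the $j=n$ term has coefficient $1$ and reproduces $\tT_n(p^2)$. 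Substituting $\T(p)=p^{n(k-n-1)/2}T(p)$ and $\tT_j(p^2)=p^{j(k-n-1)}\sum_{t}\qbinom{n-t}{j-t}_p T_{n-t}(p^2)$ from Definition~\ref{def:tilde_t}, all powers of $p^{k}$ cancel—as they must for an algebra relation—and collecting the coefficient of each $T_s(p^2)$ turns~\eqref{eq:tilde_relation} into the displayed identity precisely when $\sum_{v=0}^{s}p^{v(v+1)/2}\qbinom{s}{v}_p=\prod_{i=1}^{s}(1+p^{i})$.

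That last equality is exactly the finite $q$-binomial theorem $\sum_{v=0}^{s}\qbinom{s}{v}_p p^{\binom{v}{2}}z^{v}=\prod_{i=0}^{s-1}(1+p^{i}z)$ specialized at $z=p$, since $p^{\binom{v}{2}}p^{v}=p^{v(v+1)/2}$; thus the passage between the two forms of the relation is the $q$-binomial inversion already built into the definition of the $\tT_j(p^2)$. This part is routine once the exponent bookkeeping is done, and I would confirm the reduction in the case $n=1$, where the displayed identity reads $T(p)^2=T_0(p^2)+(1+p)\,T_1(p^2)$ and, using that $T_1(p^2)$ acts by the scalar from Lemma~\ref{lem:tnp2}, recovers the classical Hecke relation for elliptic modular forms.

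It remains to prove the displayed structure-constant identity, and this is where the real work lies. The cleanest route is via the Satake isomorphism $\Phi\colon\cH\otimes\CC\to\CC[x_0^{\pm1},x_1^{\pm1},\ldots,x_n^{\pm1}]^{W}$, where $W$ acts by permutations of the $x_i$ and by $x_i\mapsto x_i^{-1}$: since $\Phi$ is an injective algebra homomorphism, it suffices to check the identity on Satake images. Here $\Phi(T(p))$ is, in suitable variables and up to a power of $p$, equal to $x_0\prod_{i=1}^{n}(1+x_i)$, so that $\Phi(T(p)^2)$ is its square, while the $\Phi(T_s(p^2))$ are the $p$-weighted symmetric expressions tabulated in~\cite{AndrianovZhuravlev}. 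Expanding $\prod_{i=1}^{n}(1+x_i)^{2}$ and matching coefficients against the $\Phi(T_s(p^2))$ should produce the constants $\prod_{i=1}^{s}(1+p^{i})$. A more hands-on alternative is to decompose the product $T(p)\cdot T(p)$ directly into the norm-$p^2$ double cosets and count multiplicities by intersecting right-coset decompositions; there the coefficient $\prod_{i=1}^{s}(1+p^{i})$ should emerge as a count of isotropic subspaces of $\fp^{2n}$ (for $s=n$ it is the number of Lagrangians), which is the combinatorial origin of the $q$-binomials.

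The main obstacle is this final matching step for general $n$: one must have the Satake images of all the $T_s(p^2)$ in hand with the correct $p$-powers, and identifying the coefficients reduces to a symmetric-function computation of $q$-Vandermonde type. The surrounding arithmetic—tracking the similitude factors $\eta(\gamma)$ from the slash action~\eqref{eq:slash}, the normalizations $p^{n(k-n-1)/2}$ and $p^{j(k-n-1)}$, and the intrinsic $p$-weights inside $\Phi(T_s(p^2))$—is delicate and is the step most prone to off-by-one errors; I would use the $n=1$ and $n=2$ specializations, checked against the known low-degree Hecke relations, as safeguards throughout.
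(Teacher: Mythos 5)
The paper gives no proof of Theorem~\ref{thm:tilde_relation}: it is imported verbatim from Hafner--Walling \cite[Proposition 5.1]{HafnerWalling}, so there is no internal argument to compare yours against. Judged on its own terms, your reduction step is correct and is, in fact, exactly the computation in the paper's proof of Theorem~\ref{thm:main_relation} run in reverse: the $k$-dependence cancels as you say (the exponent of the $j=n-i$ term collapses to $i(i+1)/2$), and Gauss's binomial formula converts $\sum_{i=0}^{s}p^{i(i+1)/2}\qbinom{s}{i}_p$ into $\prod_{i=1}^{s}(1+p^i)$. So you have correctly shown that \eqref{eq:tilde_relation} is \emph{equivalent} to the structure-constant identity $T(p)^2=\sum_{s=0}^{n}\bigl(\prod_{i=1}^{s}(1+p^i)\bigr)T_s(p^2)$, and your $n=1$ sanity check of that identity is right.

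The gap is that this equivalent identity is never proved. It carries the entire content of the theorem, and in this paper it can only be obtained \emph{from} Theorem~\ref{thm:tilde_relation} (via Theorem~\ref{thm:main_relation}), so invoking anything downstream would be circular; you must establish it independently. Your two proposed routes are both plausible in outline but are left at the level of ``should produce the constants'' and ``should emerge as a count'': for the Satake route you would need the explicit images $\Phi(T_s(p^2))$ with their internal $p$-weights (these are genuinely intricate sums over pairs of subsets, not just elementary symmetric polynomials) and then a $q$-Vandermonde--type matching that you do not carry out; for the double-coset route you would need to actually compute the multiplicities in $K\gamma K\cdot K\gamma K$ and show they are counted by isotropic subspaces of $\fp^{2n}$, which is again the whole difficulty. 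Until one of these is executed for general $n$, the argument proves only that the Hafner--Walling relation is equivalent to another unproven identity, not the relation itself.
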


We will make crucial use of this identity, which we first translate into a
statement about the operators $T(p)$, $T(p^2)$, $T_j(p^2)$:
\begin{theorem}
  \label{thm:main_relation}
  The following relation holds in the local Hecke algebra at $p$:
  \begin{equation}
    \label{eq:main_relation}
    T(p^2)=T(p)^2-\sum_{s=1}^n c_s T_s(p^2),
  \end{equation}
  where the coefficients $c_s$ are positive and given by
  \begin{equation}
    \label{eq:main_coeffs}
    c_s = \sum_{i=1}^s p^{i(i+1)/2}\qbinom{s}{i}_p=
    \left(\prod_{i=1}^s \left(p^i+1\right)\right) - 1.
  \end{equation}
\end{theorem}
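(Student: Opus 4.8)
The plan is to substitute the definitions from Definition~\ref{def:tilde_t} into the Hafner--Walling identity~\eqref{eq:tilde_relation} and solve for $T(p^2)$. First I would record the two observations $\T(p)^2 = p^{n(k-n-1)}T(p)^2$ and $\tT_n(p^2) = p^{n(k-n-1)}T(p^2)$ (the latter being the content of the Remark following Definition~\ref{def:tilde_t}). Expanding each term on the right-hand side via $\tT_j(p^2) = p^{j(k-n-1)}\sum_{t=0}^j \qbinom{n-t}{j-t}_p T_{n-t}(p^2)$ then turns~\eqref{eq:tilde_relation} into an identity all of whose terms carry a power of $p^{k-n-1}$.

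The decisive step is to divide the whole identity by $p^{n(k-n-1)}$: the two leading terms become exactly $T(p^2)$ and $T(p)^2$, and a short exponent computation shows that \emph{all dependence on the auxiliary integer $k$ cancels}, as it must for a relation that lives purely in the Hecke algebra $\cH$. Concretely, the power of $p$ attached to the summand $\qbinom{n-t}{j-t}_p T_{n-t}(p^2)$ arising from the $j$-th term simplifies to $n(n+1)/2 + j(j-2n-1)/2$. I would then interchange the order of the resulting double sum (outer index $j$ from $0$ to $n-1$, inner index $t$ from $0$ to $j$, hence after swapping $j$ from $t$ to $n-1$) so as to collect the coefficient of each $T_s(p^2)$. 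Writing $s = n-t$ and $i = j-t$, the coefficient $c_s$ becomes a sum over $i$ from $0$ to $s-1$, and the exponent collapses to the clean form $(i-s)(i-s-1)/2$.

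Finally, reindexing by $\ell = s-i$ and invoking the symmetry $\qbinom{s}{s-\ell}_p = \qbinom{s}{\ell}_p$ converts the exponent into $\ell(\ell+1)/2$ and yields the first expression $c_s = \sum_{\ell=1}^s p^{\ell(\ell+1)/2}\qbinom{s}{\ell}_p$. The closed product form then follows from the terminating identity $\prod_{i=1}^s (1+p^i) = \sum_{\ell=0}^s p^{\ell(\ell+1)/2}\qbinom{s}{\ell}_p$ (a specialization of the $q$-binomial theorem, with $x=q=p$), upon isolating the trivial $\ell=0$ term; positivity of the $c_s$ is then immediate from either expression.

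I expect the main obstacle to be the exponent bookkeeping, in particular verifying that the power of $p^{k-n-1}$ cancels uniformly across every term after the division, and keeping the successive reindexings ($s=n-t$, $i=j-t$, then $\ell=s-i$) consistent. This is where a sign or off-by-one slip is most likely; once the exponent is pinned down as $(i-s)(i-s-1)/2$, the remainder is a routine, if slightly lengthy, manipulation of $q$-binomial coefficients.
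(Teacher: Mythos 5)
Your proposal is correct and follows essentially the same route as the paper: substitute the definitions of $\T(p)$, $\tT_j(p^2)$ into the Hafner--Walling relation, divide by $p^{n(k-n-1)}$ (the $k$-dependence indeed cancels, and your exponent $n(n+1)/2+j(j-2n-1)/2$ agrees with the paper's $(n-j)(n-j+1)/2$), reindex and swap the double sum to read off $c_s=\sum_{\ell=1}^s p^{\ell(\ell+1)/2}\qbinom{s}{\ell}_p$, and close with Gauss's $q$-binomial identity. The only differences are cosmetic (you swap the sums before reindexing, the paper reindexes first), so nothing further is needed beyond writing out the bookkeeping you have already pinned down.
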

\begin{proof}
  Combine Equation~\eqref{eq:tilde_relation} with the relations from
  Definition~\ref{def:tilde_t} and divide by the normalizing factor
  $p^{n(k-n-1)}$ to get
  \begin{equation*}
    T(p^2)-T(p)^2=
    -\sum_{j=0}^{n-1}p^{(n-j)(n-j+1)/2}
    \sum_{t=0}^j \qbinom{n-t}{j-t}_pT_{n-t}(p^2).
  \end{equation*}
  We substitute $i=n-j$ and $s=n-t$:
  \begin{equation*}
    T(p^2)-T(p)^2=
    -\sum_{i=1}^n p^{i(i+1)/2}\sum_{s=i}^n \qbinom{s}{s-i}_pT_s(p^2),
  \end{equation*}
  after which we interchange the two summations and use
  $\qbinom{s}{s-i}_p=\qbinom{s}{i}_p$ to get Equation~\eqref{eq:main_relation}
  with coefficients
  \begin{equation*}
    c_s = \sum_{i=1}^s p^{i(i+1)/2}\qbinom{s}{i}_p.
  \end{equation*}

  It remains to show that these coefficients can be simplified to give the
  product on the right hand side of Equation~\eqref{eq:main_coeffs}.  For this
  we use Gauss's binomial formula (see, for instance,
  \cite[Equation~5.5]{KacCheung}):
  \begin{equation*}
    (x+a)_q^n=\sum_{j=0}^n\qbinom{n}{j}_q q^{j(j-1)/2}a^jx^{n-j}.
  \end{equation*}
  With our notation ($x=1$, $a=p$, $q=p$, $n=s$), this gives precisely
  \begin{equation*}
    c_s+1 = \sum_{i=0}^s p^{i(i+1)/2}\qbinom{s}{i}_p=
    (1+p)_p^s = \prod_{j=1}^s \left(1+p^j\right),
  \end{equation*}
  which concludes the proof.
\end{proof}


\section{Distinguishing eigenforms via the operators $T(p)$, $T(p^2)$, $T_j(p^2)$}
\label{sect:tjp2}

Given a Siegel eigenform $F$ of degree $n$, let $a_F(T)$ denote its eigenvalue
for $T\in\mathcal{H}$, and set
\begin{equation*}
  \mathcal{E}_F(p) = \left(a_F(T(p)), a_F(T(p^2)), a_F(T_1(p^2)),
    a_F(T_2(p^2)), \ldots, a_F(T_{n-1}(p^2))\right).
\end{equation*}

\begin{theorem}
  \label{thm:tjp2}
  Let $F$ and $G$ be Siegel eigenforms of degree $n$ on $\Gamma_0(N)$, of
  respective weights $(\lambda_1\geq\lambda_2\geq\ldots\geq\lambda_n)$ and
  $(\mu_1\geq\mu_2\geq\ldots\geq\mu_n)$, satisfying
  \begin{equation*}
    \sum_{j=1}^n \lambda_j\neq \sum_{j=1}^n \mu_j.
  \end{equation*}

  If $p$ is a prime number not dividing $N$, then
  \begin{equation*}
    \mathcal{E}_F(p)\neq\mathcal{E}_G(p).
  \end{equation*}
\end{theorem}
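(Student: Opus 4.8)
The plan is to argue by contraposition: I will assume that $\mathcal{E}_F(p)=\mathcal{E}_G(p)$ and deduce that $\sum_j\lambda_j=\sum_j\mu_j$, contradicting the hypothesis. The essential tool is the relation~\eqref{eq:main_relation} of Theorem~\ref{thm:main_relation}, which expresses $T(p^2)$ in terms of $T(p)^2$ and the operators $T_1(p^2),\ldots,T_n(p^2)$. The point to exploit is that the tuple $\mathcal{E}_F(p)$ records the eigenvalues of \emph{every} operator appearing in this relation \emph{except} $T_n(p^2)$, so the relation can be used to solve for the eigenvalue of $T_n(p^2)$ in terms of the data contained in $\mathcal{E}_F(p)$.

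First I would translate~\eqref{eq:main_relation} into a statement about eigenvalues. Since $F$ is an eigenform, applying $T(p)^2$ to $F$ scales it by $a_F(T(p))^2$ and each $T_s(p^2)$ scales it by $a_F(T_s(p^2))$, so~\eqref{eq:main_relation} yields the scalar identity
\begin{equation*}
  a_F(T(p^2)) = a_F(T(p))^2 - \sum_{s=1}^n c_s\, a_F(T_s(p^2)).
\end{equation*}
Isolating the $s=n$ term and dividing by $c_n$ gives
\begin{equation*}
  a_F(T_n(p^2)) = \frac{1}{c_n}\left(a_F(T(p))^2 - a_F(T(p^2)) - \sum_{s=1}^{n-1} c_s\, a_F(T_s(p^2))\right),
\end{equation*}
and the same formula holds verbatim for $G$.

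Next I would invoke the assumption $\mathcal{E}_F(p)=\mathcal{E}_G(p)$: every quantity on the right-hand side above agrees for $F$ and $G$, whence $a_F(T_n(p^2))=a_G(T_n(p^2))$. Finally, Lemma~\ref{lem:tnp2} identifies these two eigenvalues as $p^{\sum\lambda_j-n(n+1)/2}$ and $p^{\sum\mu_j-n(n+1)/2}$; since $p>1$, their equality forces $\sum_j\lambda_j=\sum_j\mu_j$, which is the desired contradiction.

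I do not anticipate any serious obstacle here, as the conceptual content is already packaged in Theorem~\ref{thm:main_relation}. The only point requiring attention is that $c_n\neq 0$, which is immediate from the product formula~\eqref{eq:main_coeffs}: one has $c_n+1=\prod_{i=1}^n(p^i+1)\geq 2$, so in fact $c_n>0$. The virtue of the relation is precisely that it lets one recover the eigenvalue of the ``weight-detecting'' operator $T_n(p^2)$ from the eigenvalues bundled together in $\mathcal{E}_F(p)$, thereby reducing the theorem to the elementary Lemma~\ref{lem:tnp2}.
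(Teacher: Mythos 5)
Your proposal is correct and follows essentially the same route as the paper: assume $\mathcal{E}_F(p)=\mathcal{E}_G(p)$, use the relation of Theorem~\ref{thm:main_relation} to solve for $a_F(T_n(p^2))$ and $a_G(T_n(p^2))$ (noting $c_n\neq 0$), and then apply Lemma~\ref{lem:tnp2} to force $\sum\lambda_j=\sum\mu_j$. No gaps; your observation that $c_n>0$ via the product formula is exactly the justification the paper relies on.
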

\begin{proof}
  Assume, on the contrary, that $\mathcal{E}_F(p)=\mathcal{E}_G(p)$.
  Theorem~\ref{thm:main_relation} gives
  \begin{equation*}
    c_nT_n(p^2) = T(p)^2 - T(p^2) - \sum_{s=1}^{n-1}c_sT_s(p^2),
  \end{equation*}
  so for our two forms $F$ and $G$ we get
  \begin{align*}
    c_na_F(T_n(p^2)) &= a_F(T(p))^2 - a_F(T(p^2)) -
    \sum_{s=1}^{n-1}c_sa_F(T_s(p^2))\\
    c_na_G(T_n(p^2)) &= a_G(T(p))^2 - a_G(T(p^2)) - 
    \sum_{s=1}^{n-1}c_sa_G(T_s(p^2)).
  \end{align*}
  Our hypothesis then tells us that the two right hand sides are equal, so
  $c_na_F(T_n(p^2))=c_na_G(T_n(p^2))$, and since $c_n\neq 0$, we get
  $a_F(T_n(p^2))=a_G(T_n(p^2))$.

  Finally, since $p$ does not divide the level $N$, Lemma~\ref{lem:tnp2}
  gives us
  \begin{align*}
    a_F(T_n(p^2)) &= p^{(\sum_{j=1}^n\lambda_j)-n(n+1)/2}\\
    a_G(T_n(p^2)) &= p^{(\sum_{j=1}^n\mu_j)-n(n+1)/2},
  \end{align*}
  from which we conclude that $\sum\lambda_j=\sum\mu_j$, a contradiction.
\end{proof}

A special case of interest is that of scalar-valued forms:
\begin{corollary}
  Under the same notation as in Theorem~\ref{thm:tjp2}, suppose that $F$ and
  $G$ are scalar-valued, of respective weights $k_1\neq k_2$.  Then
  $\mathcal{E}_F(p)\neq\mathcal{E}_G(p)$.
\end{corollary}


\section{Distinguishing degree $2$ eigenforms via the operators $T(p^r)$}
\label{sect:tpr}

The structure of the spaces of scalar-valued Siegel modular forms of degree
$2$ and level $1$ is well-known thanks to results of Igusa; these results,
together with the interplay between Jacobi modular forms and Siegel modular
forms, have allowed the explicit decomposition of these spaces into
eigenspaces for the Hecke operators.  This approach was introduced by
Skoruppa~\cite{Skoruppa}
and exploited and refined by a number of authors, most recently
Raum~\cite{Raum}.
Bases for these spaces are computed as sets of explicit Fourier expansions,
and the effect of the operators $T(p)$, $T_j(p^2)$ on these Fourier expansions
can then be computed using the explicit formulas
from~\cite{AndrianovZhuravlev} or~\cite{HafnerWalling}.
The
results of the last section fit naturally in this setting.

However, the Fourier-expansion based approach is not the only way of obtaining
Hecke eigenforms.  In fact, when working with vector-valued Siegel modular
forms of weight\footnote{In the context of vector-valued Siegel modular forms of
    degree $2$, the weight is often expressed as the actual representation
    $\Sym^j(\CC^2)\otimes\det^k$, rather than via the highest weight vector
    notation $(\lambda_1\geq\lambda_2)$.  We prefer to stick with the latter
    for the sake of consistency with the degree $n$ situation treated in the
    previous sections.  The reader who prefers to think in terms of
    $\Sym^j\otimes\det^k$ can use the dictionary
    $j=\lambda_1-\lambda_2$, $k=\lambda_2$.}
$\Sym^j\otimes\det^k$, this approach only works for small
values of $j$.  In a series of papers
(\cite{FaberGeer1},~\cite{FaberGeer2},~\cite{BergstromFaberGeer-level2},~\cite{BergstromFaberGeer-deg3}), Bergstr\"om, Faber and van der Geer
have developed a completely different way of computing eigenforms, based on
the cohomology of local systems on the moduli space of abelian varieties, and
counting curves of certain type over finite fields.  Their method naturally
works with vector-valued Siegel modular forms, and has been implemented
successfully to the study of forms of level $1$ and degree $2$ or $3$, and to
forms of level $\Gamma_0(2)$ and degree $2$.  The eigenvalues that it produces
are attached to the Hecke operators $T(p^r)$, rather than the $T_j(p^2)$ that
we have been considering so far.

In this section, we exhibit a result (Theorem~\ref{thm:tpr}) of the same
flavour as Theorem~\ref{thm:tjp2}, but for the operators $T(p^r)$.  As we will
see, the situation is more complicated here, so we prefer to restrict our
treatment to forms of degree $2$.  The same method should apply to other small
degrees, at the expense of increasingly tedious algebraic manipulations.


We start by gathering some useful relations between the eigenvalues $a_F(p^r)$
of an eigenform $F$.

\begin{lemma} 
  \label{lem:ap_deg2}
  Let $F$ be a Siegel eigenform of degree $2$ on $\Gamma_0(N)$, of
  weight
  $(\lambda_1\geq\lambda_2)$.  Let $p$ be a prime not dividing $N$.  Let
  $a_F(p^r)$ be the Hecke eigenvalues of $F$ under the operators $T(p^r)$. 
  Then the following identity holds: 
  \begin{equation}
    \label{eq:ap3}
    a_F(p^3)-2a_F(p)a_F(p^2)+a_F(p)^3-p^{\lambda_1+\lambda_2-4}(p+1)a_F(p)=0
  \end{equation}

  Further, if $a_F(p)=0$ then 
  \begin{align}
    \label{eq:ap_odd}
    &a_F(p^{2i+1})=0 \qquad\text{for all } i=0,1,2,\dots,\\
    \label{eq:ap4}
    &a_F(p^4)-a_F(p^2)^2-p^{\lambda_1+\lambda_2-4}a_F(p^2)+p^{2\lambda_1+2\lambda_2-6}=0,\\
    \label{eq:ap6}
    &a_F(p^6)-a_F(p^4)a_F(p^2)-p^{\lambda_1+\lambda_2-4}a_F(p^4)+p^{2\lambda_1+2\lambda_2-6}a_F(p^2)=0.
  \end{align}
\end{lemma}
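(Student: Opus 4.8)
The plan is to package all four identities of Lemma~\ref{lem:ap_deg2} into a single object: the formal generating series $D_F(X) = \sum_{r\ge 0} a_F(p^r) X^r$. By Andrianov's rationality theorem for the genus-$2$ spinor Hecke series (see \cite[Chapter~III]{AndrianovZhuravlev}), this series is a rational function $D_F(X) = N(X)/Q(X)$ with $N(X) = 1 + n_2 X^2$ and $Q(X)$ the degree-$4$ spinor denominator, whose coefficients are scalar values of the eigenvalues $a_F(T(p))$, $a_F(T_1(p^2))$ and $a_F(T_2(p^2))$. Writing $Q(X) = 1 - e_1 X + e_2 X^2 - e_3 X^3 + e_4 X^4$, one has $e_1 = a_F(T(p)) = a_F(p)$ from the $X^1$-coefficient, and the functional equation of the spinor polynomial forces the symmetry $e_3 = a_F(T_2(p^2))\,e_1$ and $e_4 = a_F(T_2(p^2))^2$.

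First I would make $N$ and $Q$ explicit. By Lemma~\ref{lem:tnp2} (with $n=2$, so $n(n+1)/2 = 3$) we have $a_F(T_2(p^2)) = p^{\lambda_1+\lambda_2-3}$, which turns the symmetric coefficients into monomials: $e_3 = p^{\lambda_1+\lambda_2-3}a_F(p)$ and $e_4 = p^{2\lambda_1+2\lambda_2-6}$, while tracking the slash normalization~\eqref{eq:slash} gives $n_2 = -p^{\lambda_1+\lambda_2-4}$. Only $e_2$ still involves the unknown $a_F(T_1(p^2))$, and I would record its value through the $n=2$ case of Theorem~\ref{thm:main_relation}, namely $a_F(p^2) = a_F(p)^2 - p\,a_F(T_1(p^2)) - (p^3+p^2+p)\,p^{\lambda_1+\lambda_2-3}$, which is exactly the $X^2$-coefficient relation of the identity $D_F Q = N$.

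Next I would clear denominators and read off coefficients of $D_F(X)\,Q(X) = N(X)$. The coefficient of $X^3$ gives $a_F(p^3) - e_1 a_F(p^2) + e_2 a_F(p) - e_3 = 0$; inserting the explicit $e_1, e_3$ and eliminating $e_2$ (equivalently $a_F(T_1(p^2))$) by the $X^2$-coefficient relation yields precisely~\eqref{eq:ap3}, with the combination $p^{\lambda_1+\lambda_2-4}+p^{\lambda_1+\lambda_2-3} = p^{\lambda_1+\lambda_2-4}(p+1)$ emerging at the end. For the remaining identities I would specialize to $a_F(p)=0$: then $e_1 = 0$ and $e_3 = p^{\lambda_1+\lambda_2-3}a_F(p) = 0$, so $Q(X) = 1 + e_2 X^2 + e_4 X^4$ and $N(X)$ are both even in $X$; hence $D_F$ is even and every odd coefficient vanishes, which is~\eqref{eq:ap_odd}. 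Setting $Y = X^2$, the identity $\bigl(\sum_i a_F(p^{2i}) Y^i\bigr)\bigl(1 + e_2 Y + e_4 Y^2\bigr) = 1 + n_2 Y$ gives the two-term recursion $a_F(p^{2i}) = -e_2\, a_F(p^{2i-2}) - e_4\, a_F(p^{2i-4})$; substituting $e_2 = n_2 - a_F(p^2)$ with $n_2 = -p^{\lambda_1+\lambda_2-4}$ and $e_4 = p^{2\lambda_1+2\lambda_2-6}$, the cases $i=2$ and $i=3$ reproduce~\eqref{eq:ap4} and~\eqref{eq:ap6} respectively.

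The main obstacle I anticipate is bookkeeping rather than conceptual: pinning down the exact powers of $p$ in $e_3$, $e_4$ and $n_2$. The operators $T(p^r)$ here are the geometric ones, normalized through the similitude factor $\eta(\gamma)^{\sum\lambda_j - 3}$ in~\eqref{eq:slash}, so their generating series differs from Andrianov's classically normalized spinor series by explicit powers of $p$; getting these shifts right is exactly what produces the exponents $\lambda_1+\lambda_2-4$ and $2\lambda_1+2\lambda_2-6$ rather than naive guesses. I would fix them by enforcing the spinor functional equation on $Q$ and by using Lemma~\ref{lem:tnp2} together with the $n=2$ case of Theorem~\ref{thm:main_relation} as consistency checks on the $X^2$-coefficient, after which all four displayed identities fall out of the elementary coefficient extractions above.
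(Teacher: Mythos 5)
Your proposal is correct and follows essentially the same route as the paper: the paper invokes Shimura's rationality identity $z^4f(1/z)\sum_r T(p^r)z^r = 1 - p^2T_2(p^2)z^2$, specializes $T_2(p^2)$ to a scalar, and reads off the coefficients of $z^3$, of the odd powers, and of $z^4$ and $z^6$, which is precisely your computation $D_F(X)Q(X)=N(X)$ with the same constants $n_2=-p^{\lambda_1+\lambda_2-4}$, $e_3=p^{\lambda_1+\lambda_2-3}a_F(p)$, $e_4=p^{2\lambda_1+2\lambda_2-6}$. The one bookkeeping point (which you flagged yourself) is that the paper's working value is $T_2(p^2)F=p^{\lambda_1+\lambda_2-6}F$ (its equation~\eqref{eq:tp_ap}) rather than the $p^{\lambda_1+\lambda_2-3}$ you quote from the statement of Lemma~\ref{lem:tnp2}, but since the classical spinor denominator carries the compensating factors $p^3$ and $p^6$ in $e_3$ and $e_4$, your final monomials coincide with the paper's and all four identities come out the same.
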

\begin{proof}
By definition, $T(p^r)F=a_F(p^r)F$ for all $r$.  Moreover, if 
$p\nmid N$, then by Lemma~\ref{lem:tnp2} we have
\begin{equation}
  \label{eq:tp_ap}
    T_2(p^2)F = p^{\lambda_1+\lambda_2-6}F. 
\end{equation}

By~\cite[Theorem 2]{Shimura}, the Hecke operators at $p$ satisfy the
relations summarised by 
\begin{equation}
  \label{eq:gen_fn_deg2}
  z^4f(1/z)\sum_{i=0}^{\infty} T(p^i)z^i=1-p^2T_2(p^2)z^2
\end{equation}
where $f$ is the degree $4$ polynomial (\emph{aka} local Euler factor at $p$)
\begin{equation*}
  f(X)=X^4-T(p)X^3+\left[T(p)^2-T(p^2)-p^2T_2(p^2)\right]X^2
    -p^3T(p)T_2(p^2)X+p^6T_2(p^2)^2.
\end{equation*}
Upon expansion of~\eqref{eq:gen_fn_deg2} and inspection of the degree $3$
terms, we obtain 
\begin{equation*}
  T(p^3)-2T(p)T(p^2)+T(p)^3-p^2(p+1)T(p)T_2(p^2)=0,
\end{equation*}
which gives rise to~\eqref{eq:ap3}.

For any $i\geq 0$ we can equate the coefficient of $z^{2i+1}$ on
either side of~\eqref{eq:gen_fn_deg2} to find that 
\begin{multline*}
  T(p^{2i+1})-T(p)T(p^{2i})+\left[T(p)^2-T(p^2)-p^2T_2(p^2)\right]T(p^{2(i-1)+1})\\
  -p^3T(p)T_2(p^2)T(p^{2i-2})+p^6T_2(p^2)T(p^{2(i-2)+1})=0
\end{multline*} 
(we harmlessly define $T(p^i)=0$ for $i<0$).  The claimed
equality~\eqref{eq:ap_odd} now follows by induction on $i\geq 0$ under the
additional hypothesis $a_F(p)=0$.
 
The terms of degree $4$ and $6$ respectively in the
identity~\eqref{eq:gen_fn_deg2} are
\begin{align*}
  &T(p^4)-T(p)T(p^3)+T(p)^2T(p^2)-T(p^2)^2\\
  &\qquad\qquad\qquad\qquad\qquad -p^2T_2(p^2)-p^3T(p)^2T_2(p^2)+p^6T_2(p^2)^2=0,\\
  &T(p^6)-T(p)\left[T(p^5)-T(p)T(p^4)+p^3T_2(p^2)T(p^3)\right]\\
  &\qquad\qquad\qquad\qquad\qquad -T(p^2)T(p^4)-p^2T_2(p^2)^2T(p^4)+p^6T_2(p^2)^2T(p^2)=0.
\end{align*}
Equalities~\eqref{eq:ap4} and~\eqref{eq:ap6} follow
from these expressions, together
with the relation~\eqref{eq:tp_ap} and the assumption $a_F(p)=0$.
\end{proof}
 
\begin{theorem}
  \label{thm:tpr}
  Let $F$ and $G$ be Siegel eigenforms of degree $2$ on
  $\Gamma_0(N)$, of respective weights $(\lambda_1\geq\lambda_2)$ and
  $(\mu_1\geq\mu_2)$, with 
  \begin{equation*}
    \lambda_1+\lambda_2\neq\mu_1+\mu_2.
  \end{equation*}
  Let $p$ be a
  prime not dividing $N$.  Let $a_F(p^r)$, $a_G(p^r)$ denote the Hecke
  eigenvalues of $F$, $G$ for the operators $T(p^r)$.
  \begin{enumerate}[(i)]
    \item If $a_F(p)$ and $a_G(p)$ are not both zero, then
      \begin{equation*}
        \left(a_F(p), a_F(p^2), a_F(p^3)\right) \neq
        \left(a_G(p), a_G(p^2), a_G(p^3)\right).
      \end{equation*}
    \item If $a_F(p)=a_G(p)=0$, then
      \begin{equation*}
        \left(a_F(p^2), a_F(p^4), a_F(p^6)\right) \neq
        \left(a_G(p^2), a_G(p^4), a_G(p^6)\right).
      \end{equation*}
  \end{enumerate}
\end{theorem}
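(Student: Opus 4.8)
The plan is to argue by contradiction in each case, in the same spirit as the proof of Theorem~\ref{thm:tjp2}: I would assume that the two displayed tuples of eigenvalues coincide and then use the identities of Lemma~\ref{lem:ap_deg2} to force $\lambda_1+\lambda_2=\mu_1+\mu_2$. Throughout I abbreviate $w=\lambda_1+\lambda_2$ and $w'=\mu_1+\mu_2$, so that the hypothesis reads $w\neq w'$ and hence $p^{w}\neq p^{w'}$.

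For part (i) I would write the relation~\eqref{eq:ap3} for both $F$ and $G$ and subtract. Under the assumption that the eigenvalues for $T(p)$, $T(p^2)$, $T(p^3)$ agree, every term of~\eqref{eq:ap3} cancels except the one carrying the weight exponent, leaving
\begin{equation*}
  (p+1)\,a_F(p)\,\bigl(p^{w'-4}-p^{w-4}\bigr)=0.
\end{equation*}
Since $p+1\neq0$ and $p^{w}\neq p^{w'}$, this forces $a_F(p)=a_G(p)=0$, contradicting the hypothesis of~(i); this case is therefore immediate.

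For part (ii) I would assume $a_F(p)=a_G(p)=0$ together with the equality of the tuples $\left(a_F(p^2),a_F(p^4),a_F(p^6)\right)$ and $\left(a_G(p^2),a_G(p^4),a_G(p^6)\right)$, and denote the three common values by $b$, $d$, $e$. Subtracting the two instances of~\eqref{eq:ap4} and clearing denominators, the terms $d$ and $b^2$ cancel and one is left with a relation of the form $(p^{w'}-p^{w})\bigl(p^2 b-(p^{w}+p^{w'})\bigr)=0$, whence $b=p^{w-2}+p^{w'-2}$. Treating the two instances of~\eqref{eq:ap6} in exactly the same way yields $p^2 d=(p^{w}+p^{w'})\,b=p^2 b^2$, so that $d=b^2$.

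The contradiction then comes by feeding $d=b^2$ back into~\eqref{eq:ap4} for the single form $F$: the quadratic terms cancel and one obtains $p^{w-4}b=p^{2w-6}$, that is, $b=p^{w-2}$. Comparing this with the symmetric expression $b=p^{w-2}+p^{w'-2}$ found above forces $p^{w'-2}=0$, which is absurd. I expect the only delicate point to be this final move: the three \emph{difference} equations are symmetric in $w$ and $w'$ and so cannot distinguish the two weights on their own, and the inconsistency surfaces only once a single-form relation is re-used to pin $b$ down to the single exponent $p^{w-2}$. The remainder is routine bookkeeping of exponents, which I would keep transparent by substituting $u=p^{w}$ and $v=p^{w'}$ before factoring.
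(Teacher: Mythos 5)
Your proposal is correct and follows essentially the same route as the paper: part (i) by subtracting the two instances of~\eqref{eq:ap3}, and part (ii) by deriving $a(p^2)=p^{\lambda_1+\lambda_2-2}+p^{\mu_1+\mu_2-2}$ (up to the factor $p^{-2}$ normalization) and $a(p^4)=a(p^2)^2$ from the differenced relations~\eqref{eq:ap4} and~\eqref{eq:ap6}, then re-using the single-form relation~\eqref{eq:ap4} to reach a contradiction. The only cosmetic difference is that the paper applies~\eqref{eq:ap4} to $F$ and $G$ separately to get $p^{\lambda_1+\lambda_2-2}=p^{\mu_1+\mu_2-2}$, whereas you compare the single-form value of $b$ with the symmetric one; both are valid.
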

\begin{proof} 
  \
  \begin{enumerate}[(i)]
    \item Assume $a_F(p)\neq 0$.  Suppose, on the contrary, that
      $a_F(p^r)=a_G(p^r)$ for each of $r=1,2,3$.
 
      By~\eqref{eq:ap3} from Lemma~\ref{lem:ap_deg2},
      \begin{align*}
        a_F(p^3)-2a_F(p)a_F(p^2)+a_F(p)^3-p^{\lambda_1+\lambda_2-4}(p+1)a_F(p)&=0,\\
        a_G(p^3)-2a_G(p)a_G(p^2)+a_G(p)^3-p^{\mu_1+\mu_2-4}(p+1)a_G(p)&=0.
      \end{align*}
      This implies that 
      \begin{equation*}
        (p+1)a_F(p)(p^{\lambda_1+\lambda_2-4}-p^{\mu_1+\mu_2-4})=0
      \end{equation*}
      and under the assumption that $a_F(p)\neq 0$ we conclude that
      $\lambda_1+\lambda_2=\mu_1+\mu_2$, a contradiction.

    \item Assume that $a_F(p^r)=a_G(p^r)=:a(p^r)$ for each $r\in\{1,2,4,6\}$
      and that $a(p)=0$.  Equality~\eqref{eq:ap4} of Lemma~\ref{lem:ap_deg2}
      then implies that 
      \begin{equation*} 
        a(p)^2(p^{\mu_1+\mu_2}-p^{\lambda_1+\lambda_2}) =
        \frac{1}{p^2}(p^{2\mu_1+2\mu_2}-p^{2\lambda_1+2\lambda_2}).
      \end{equation*}
      Recognising the difference of perfect squares in the right hand
      expression, we may (under our standing assumption that 
      $\lambda_1+\lambda_2\neq\mu_1+\mu_2$)
      conclude that 
      \begin{equation*}
        a(p^2)=\frac{p^{\lambda_1+\lambda_2}+p^{\mu_1+\mu_2}}{p^2}.
      \end{equation*}
      Under the same assumptions, an identical analysis beginning with
      equality~\eqref{eq:ap6} of Lemma~\ref{lem:ap_deg2} leads to the identity
      \begin{equation*}
        a(p^4)=\frac{a(p^2)(p^{\lambda_1+\lambda_2}+p^{\mu_1+\mu_2})}{p^2}=a(p^2)^2.
      \end{equation*}

      Applying~\eqref{eq:ap4} to $F$ and $G$ respectively we find that 
      \begin{align*}
        a(p^2)&=a_F(p^2)=
        \frac{p^{2\lambda_1+2\lambda_2-6}}{p^{\lambda_1+\lambda_2-4}}=
        p^{\lambda_1+\lambda_2-2}\\
        a(p^2)&=a_G(p^2)=
        \frac{p^{2\mu_1+2\mu_2-6}}{p^{\mu_1+\mu_2-4}}=p^{\mu_1+\mu_2-2}.
      \end{align*}
      Once again we have contradicted the assumption 
      $\lambda_1+\lambda_2\neq\mu_1+\mu_2$.
  \end{enumerate}
\end{proof}

\begin{corollary}
  Let $F$ and $G$ be as in Theorem~\ref{thm:tpr}.  There exists
  \begin{equation*}
    m\leq (2\log(N)+2)^6\qquad\text{such that }a_F(m)\neq a_G(m).
  \end{equation*}
\end{corollary}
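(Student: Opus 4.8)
The plan is to combine Theorem~\ref{thm:tpr} with an elementary estimate for the least prime not dividing $N$. First I would note that Theorem~\ref{thm:tpr} already does all the hard analytic work: for \emph{any} prime $p\nmid N$ exactly one of its two cases applies (either $a_F(p)$ and $a_G(p)$ are not both zero, or they are both zero), and in each case the displayed tuple of eigenvalues for $F$ differs from that for $G$. Since the exponents occurring are $r\in\{1,2,3\}$ in the first case and $r\in\{2,4,6\}$ in the second, in either situation there is some $r\le 6$ with $a_F(p^r)\neq a_G(p^r)$. Setting $m=p^r$ then gives $a_F(m)\neq a_G(m)$ with $m\le p^6$.

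It therefore remains only to produce a \emph{small} prime $p\nmid N$, and I would apply the previous paragraph with $p$ equal to the least prime $q$ not dividing $N$; the whole statement then reduces to the number-theoretic claim $q\le 2\log N+2$, since $m=q^r\le q^6\le(2\log N+2)^6$. To prove the claim I would observe that every prime smaller than $q$ divides $N$, so (these being distinct primes) their product divides $N$, whence
\begin{equation*}
  \sum_{\substack{p<q\\ p\text{ prime}}}\log p=\theta(q)-\log q\le\log N,
\end{equation*}
where $\theta$ is the first Chebyshev function. Consequently $q\le 2\log N+2$ would follow from the purely prime-counting inequality
\begin{equation*}
  \theta(q)-\log q\ge\frac{q-2}{2}.
\end{equation*}

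To establish this last inequality I would split into two ranges. For $q\ge 101$ I would invoke the explicit Chebyshev bound $\theta(x)>0.84\,x$ of Rosser and Schoenfeld, which reduces the claim to $0.34\,q+1\ge\log q$, an inequality valid for all $q\ge 1$. For the finitely many primes $q<101$ the inequality $\theta(q)-\log q\ge(q-2)/2$ is a direct numerical check: it is tightest for the smallest primes (for instance $q=3$ gives $\log 2\ge\tfrac12$), and the slack only grows as $q$ increases. Assembling the two ranges yields $q\le 2\log N+2$ for every $N\ge 1$, which together with the first paragraph completes the proof.

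The appeal to Theorem~\ref{thm:tpr} is routine; the only genuine obstacle is the bound on the least prime non-divisor of $N$, and the subtle point there is that a naive elementary estimate of the shape $\theta(x)\ge x/2$ is \emph{just barely} too weak, being defeated by the stray $\log q$ term. One must therefore use a Chebyshev constant strictly larger than $\tfrac12$ (or argue more carefully with prime gaps); citing the explicit Rosser--Schoenfeld bound sidesteps this cleanly, at the cost of a short finite verification for small $q$.
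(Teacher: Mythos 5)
Your proof is correct and follows the same overall strategy as the paper: apply Theorem~\ref{thm:tpr} to the least prime $q$ not dividing $N$, note that the theorem forces $a_F(q^r)\neq a_G(q^r)$ for some $r\leq 6$ so that $m=q^r\leq q^6$ works, and then bound $q$ by $2\log(N)+2$. The only divergence is in how that last bound is obtained: the paper simply cites it (\cite[Theorem~1]{Ghitza}), whereas you reprove it from scratch via the observation that all primes below $q$ divide $N$, giving $\theta(q)-\log q\leq\log N$, combined with the inequality $\theta(q)-\log q\geq(q-2)/2$, which you verify numerically for primes $q<101$ and deduce from the explicit Rosser--Schoenfeld bound $\theta(x)>0.84\,x$ for $q\geq 101$. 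This is a valid, self-contained substitute for the citation, and your remark that the naive bound $\theta(x)\geq x/2$ is just barely insufficient is a fair diagnosis of why some explicit constant above $\tfrac12$ is needed. The one small inaccuracy is the parenthetical claim that the slack in $\theta(q)-\log q\geq(q-2)/2$ \emph{only grows} as $q$ increases: across a prime gap from $q$ to $q'$ the left side grows by $\log q$ while the right side grows by $(q'-q)/2$, so the slack actually dips slightly when the gap exceeds $2\log q$ (e.g.\ from $q=7$ to $q=11$). Since the inequality itself does hold at every prime below $101$ (with equality only at $q=2$) and your asymptotic argument covers the rest, this does not affect the correctness of the proof.
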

\begin{proof}
  Use the fact that given $N$, there exists a prime $p\leq 2\log(N)+2$ that
  does not divide $N$.  (For a proof, see~\cite[Theorem~1]{Ghitza}.)
\end{proof}

\section{Distinguishing degree $2$ eigenforms via Fourier coefficients}
\label{sect:fourier}


  



Let $\rho\colon\GL_2\to\GL(V)$ be a polynomial representation.
Any $F\in M_\rho(\Gamma_0(N))$ has a multivariate Fourier expansion of the
form
\begin{equation*}
  F(z)=\sum_{S\in\cF(2)} c_F(S) q^S\qquad\text{with }c_F(S)\in V,
\end{equation*}
where
\begin{itemize}
  \item the variable $z$ is in $\HH^2$, i.e. a symmetric $2\times 2$ complex
    matrix with positive-definite imaginary part;
  \item the index set $\cF(2)$ consists of all matrices $S\in\GL(2,\QQ)$ that are
    symmetric, positive-semidefinite and \emph{half-integral}, that is, $S=(s_{ij})$ with
    $2s_{ij}\in\ZZ$ and $s_{ii}\in\ZZ$;
  \item we set
    \begin{equation*}
      q^S=e^{2\pi i\Tr(Sz)}.
    \end{equation*}
\end{itemize}

Arakawa~\cite{Arakawa} obtained results on Euler products for vector-valued
Siegel modular forms of degree $2$ (extending Andrianov's investigation of the
scalar-valued case in~\cite{Andrianov-deg2}).  In particular, he gave
simple explicit formulas for the Hecke action on \emph{certain} Fourier
coefficients:
\begin{theorem}[Arakawa {\cite[Proposition 2.3]{Arakawa}}]
  Let $S=\begin{pmatrix}a & b/2\\ b/2 & c\end{pmatrix}\in\cF(2)$ be:
  \begin{itemize}
    \item primitive, that is, $\gcd(a, b, c)=1$;
    \item such that $d=b^2-4ac$ is the discriminant of the imaginary quadratic
      field $K=\QQ(\sqrt{d})$;
    \item such that $K$ has class number $1$.
  \end{itemize}
  Let $p$ be a rational prime that is inert in $K$, and let $m$ be a positive
  integer coprime to $p$.  Then, for $F\in M_\rho(\Gamma_0(N))$ and any $r$,
  we have
  \begin{equation*}
    c_{T(p^r)F}(mS) = c_F(p^rmS).
  \end{equation*}
\end{theorem}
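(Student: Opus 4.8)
The plan is to compute the action of $T(p^r)$ on the single Fourier coefficient at $mS$ head-on, by decomposing the relevant double cosets into right cosets with upper-triangular representatives and then evaluating the resulting exponential sums. Following~\cite[Lemma~III.3.32]{AndrianovZhuravlev}, $T(p^r)$ admits right-coset representatives $\gamma=\begin{pmatrix}A&B\\0&D\end{pmatrix}$, integral, with $A^tD=p^rI_2$ and $B^tD$ symmetric, taken modulo the left action of $\Gamma$: here $A$ runs over $\GL_2(\ZZ)$-classes of integral matrices whose elementary divisors divide $p^r$, the block $D=p^r(A^{-1})^t$ is then forced, and for fixed $A$ the block $B$ runs over representatives of $B\bmod\Sym_2(\ZZ)D$. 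For such $\gamma$ one has $C=0$, so \eqref{eq:slash} simplifies to $F|_{\gamma,\rho}(z)=p^{r(\lambda_1+\lambda_2-3)}\rho(D)^{-1}F((Az+B)D^{-1})$.

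Substituting the Fourier expansion of $F$ and using $D^{-1}=p^{-r}A^t$, the coefficient of $q^{mS}$ singles out the index $T_A=p^r(A^{-1})^t(mS)A^{-1}$, and I obtain
\begin{equation*}
  c_{T(p^r)F}(mS)=p^{r(\lambda_1+\lambda_2-3)}\sum_A\rho(D)^{-1}\left(\sum_B e^{2\pi i\Tr(T_ABD^{-1})}\right)c_F(T_A).
\end{equation*}
The heart of the argument is to show that every inner $B$-sum vanishes except for the single representative $A=I_2$. Writing $B=\left(\begin{smallmatrix}x&y\\y&z\end{smallmatrix}\right)$ and using $\Tr(SB)=ax+by+cz$, each $B$-sum is an additive character sum over a group of shape $\Sym_2(\ZZ/p^{\ast})$, hence nonzero exactly when the associated character is trivial, i.e. when $Q_{mS}$ takes values divisible by prescribed powers of $p$ along the sublattice directions determined by $A$. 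For the scalar representatives $A=p^jI_2$ with $j\geq 1$, triviality would force $p\mid\gcd(ma,mb,mc)$; since $S$ is primitive and $p\nmid m$, this is impossible, so these terms drop out. For a non-scalar $A$ (distinct elementary divisors $(p^{\alpha},p^{\beta})$ with $\alpha<\beta$), triviality forces $Q_{mS}$, hence $Q_S$, to represent $0$ modulo $p$ at a primitive vector (and, for the higher divisors, to be divisible there by larger powers of $p$); but $p$ is inert in $K$, so $Q_S$ is anisotropic modulo $p$ and no such vector exists, whence the term vanishes (either the $B$-sum is zero or $T_A$ fails to be half-integral, so $c_F(T_A)=0$). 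The hypotheses that $d$ is a fundamental discriminant and that $K$ has class number $1$ are what make this step legitimate: they ensure that $S$ corresponds to the maximal order $\mathcal{O}_K$ and that inertness of $p$ is detected precisely by this anisotropy, with no exotic sublattice of $(\ZZ^2,Q_S)$ escaping the argument.

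It then remains to evaluate the surviving $A=I_2$ term. There $D=p^rI_2$, $T_A=p^rmS$, the admissible $B$ form $\Sym_2(\ZZ/p^r)$ on which the character is trivial (so the $B$-sum equals $p^{3r}$), and $\rho(p^rI_2)^{-1}=p^{-r(\lambda_1+\lambda_2)}$, since a scalar $cI_2$ acts in weight $(\lambda_1,\lambda_2)$ by $c^{\lambda_1+\lambda_2}$. Combining the three factors gives $p^{r(\lambda_1+\lambda_2-3)}\cdot p^{-r(\lambda_1+\lambda_2)}\cdot p^{3r}=1$, so $c_{T(p^r)F}(mS)=c_F(p^rmS)$, as claimed.

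I expect the main obstacle to be the uniform treatment of the vanishing of the non-scalar $B$-sums: one must run through every elementary-divisor type $(p^{\alpha},p^{\beta})$ with $\alpha<\beta\leq r$ and every sublattice direction, convert the character-triviality condition into an explicit $p$-divisibility statement about $Q_{mS}$, and then dispatch it using anisotropy (for the part governed by $\beta-\alpha$) together with primitivity (for the scalar part governed by $\alpha$). The difficulty is the bookkeeping over all these cases rather than any single hard estimate; the arithmetic input---inertness read off as anisotropy of $Q_S$ modulo $p$, which is valid precisely because the order is maximal and the class number is $1$---is what makes each individual case collapse.
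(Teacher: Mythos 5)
The paper does not prove this statement: it is quoted verbatim from Arakawa \cite[Proposition 2.3]{Arakawa} and used as a black box, so there is no in-paper argument to compare yours against. Judged on its own terms, your outline follows the standard direct computation (Andrianov's method in the scalar-valued case, extended by Arakawa to vector-valued forms), and you have correctly isolated the two mechanisms that make it work: the $A=I_2$ coset contributes $c_F(p^rmS)$ with total normalizing factor $p^{r(\lambda_1+\lambda_2-3)}\cdot p^{-r(\lambda_1+\lambda_2)}\cdot p^{3r}=1$, and every other coset dies either because the $B$-character sum vanishes (scalar $A=p^jI_2$ with $j\geq 1$, using primitivity of $S$ and $p\nmid m$) or because half-integrality of $T_A$ would force $Q_S$ to represent $0$ primitively modulo $p$, contradicting the anisotropy that is equivalent to $p$ being inert.

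That said, as submitted this is a plan rather than a proof. The step you yourself flag as ``the heart of the argument'' --- running over all elementary-divisor types $(p^{\alpha},p^{\beta})$ with $\alpha<\beta\leq r$ and all sublattice directions, and showing in each case that character-triviality together with half-integrality of $T_A$ forces a primitive zero of $Q_S$ modulo $p$ --- is asserted, not carried out, and that is where essentially all of the work lies. Two smaller points. First, the class-number-one hypothesis is not what licenses the anisotropy step: anisotropy of $Q_S$ modulo $p$ follows from inertness and fundamentality of $d$ alone, while $h(K)=1$ enters Arakawa's formulation because his argument goes through the correspondence between the Fourier coefficients indexed by the $\operatorname{SL}_2(\ZZ)$-class of $S$ and ideals of the maximal order, which collapses to a single class only when the class number is one; attributing the anisotropy to it is a misstatement, though harmless here since the hypothesis is available. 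Second, for non-scalar $A$ the factor $\rho(D)^{-1}$ is a genuinely non-scalar operator on $V$; this is harmless only because those terms vanish entirely, a point worth making explicit in a vector-valued argument.
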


\begin{corollary}
  \label{cor:tpr_coeff}
  Let $p\equiv 3\pmod{4}$, $F\in M_\rho(\Gamma_0(N))$ and $r\geq 1$, then
  \begin{equation*}
    c_{T(p^r)F}(I) = c_F(p^rI).
  \end{equation*}
\end{corollary}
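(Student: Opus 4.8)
The plan is to obtain this as a direct specialization of Arakawa's theorem, applied to the index $S = I = \diag(1,1)$ with multiplier $m = 1$. First I would verify that $S = I$ satisfies the three hypotheses of the theorem. Writing $S = \begin{pmatrix} a & b/2 \\ b/2 & c \end{pmatrix}$ with $a = c = 1$ and $b = 0$, primitivity holds since $\gcd(1,0,1) = 1$; the associated discriminant is $d = b^2 - 4ac = -4$, which is exactly the discriminant of the Gaussian field $K = \QQ(\sqrt{-4}) = \QQ(i)$; and $K = \QQ(i)$ has class number $1$, a classical fact. Hence $S = I$ is an admissible index for the theorem.

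Next I would relate the congruence hypothesis on $p$ to the inertness condition required by Arakawa. The rational primes inert in $\QQ(i)$ are precisely those with $p \equiv 3 \pmod 4$ (the primes $p \equiv 1 \pmod 4$ split, while $p = 2$ ramifies), so the standing assumption $p \equiv 3 \pmod 4$ guarantees that $p$ is inert in $K$. Taking $m = 1$, which is trivially coprime to $p$, the theorem then yields $c_{T(p^r)F}(1\cdot I) = c_F(p^r \cdot 1 \cdot I)$, that is, $c_{T(p^r)F}(I) = c_F(p^r I)$ for every $r \geq 1$, which is exactly the claim.

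Since the statement is a direct instance of Arakawa's result, there is no substantial obstacle; the only point deserving a moment's care is confirming that $-4$ is genuinely the discriminant of $\QQ(i)$ (a fundamental discriminant), rather than merely the discriminant of a non-maximal order, as it is this that legitimises the application of the theorem to the specific index $S = I$.
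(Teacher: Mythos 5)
Your proposal is correct and follows exactly the same route as the paper: specialize Arakawa's theorem to $S=I$, $m=1$, noting that $I$ corresponds to $x^2+y^2$ with discriminant $-4$, that $\QQ(i)$ has class number $1$, and that $p\equiv 3\pmod 4$ is inert there. The paper's proof is a one-line version of this; your write-up simply makes the hypothesis checks explicit.
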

\begin{proof}
  The identity matrix $I$ corresponds to the quadratic form $x^2+y^2$, which
  gives the imaginary quadratic field $K=\QQ(\sqrt{-1})$.
\end{proof}

\begin{theorem}
  \label{thm:fourier}
  Let $F$ and $G$ be Siegel eigenforms of degree $2$ on $\Gamma_0(N)$, of
  respective weights $(\lambda_1\geq\lambda_2)$ and $(\mu_1\geq\mu_2)$
  satisfying
  \begin{equation*}
    \lambda_1+\lambda_2\neq\mu_1+\mu_2.
  \end{equation*}
  Suppose that at least one of the Fourier coefficients $c_F(I)$ and $c_G(I)$
  is nonzero.  Let $p$ be a prime $\equiv 3\pmod{4}$ not dividing $N$.  Then
  there exists $r$ with $0\leq r\leq 6$ such that
  \begin{equation*}
    c_F(p^rI)\neq c_G(p^rI).
  \end{equation*}
\end{theorem}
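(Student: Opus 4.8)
The plan is to argue by contradiction, turning the hypothetical equalities of Fourier coefficients into equalities of Hecke eigenvalues and then invoking Theorem~\ref{thm:tpr}. So I would suppose that $c_F(p^rI)=c_G(p^rI)$ for every $r$ with $0\leq r\leq 6$ and aim for a contradiction.

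The crucial bridge is the identity
\begin{equation*}
  c_F(p^rI)=a_F(p^r)\,c_F(I)\qquad\text{when }p\equiv 3\pmod 4,
\end{equation*}
together with its analogue for $G$. I would obtain it by evaluating the eigenform relation $T(p^r)F=a_F(p^r)F$ at the Fourier index $I$: on one side $c_{T(p^r)F}(I)=a_F(p^r)c_F(I)$, while Corollary~\ref{cor:tpr_coeff} gives $c_{T(p^r)F}(I)=c_F(p^rI)$ on the other.

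Next I would extract the scalar information. Taking $r=0$, where $T(1)$ acts as the identity, the assumed equality reads $c_F(I)=c_G(I)=:c$; since the hypothesis guarantees that at least one of $c_F(I)$, $c_G(I)$ is nonzero, we have $c\neq 0$. For each $r$ with $1\leq r\leq 6$ the assumption $c_F(p^rI)=c_G(p^rI)$ now becomes $a_F(p^r)\,c=a_G(p^r)\,c$, and cancelling the nonzero vector $c$ yields $a_F(p^r)=a_G(p^r)$. In particular the triples $(a_F(p),a_F(p^2),a_F(p^3))$ and $(a_F(p^2),a_F(p^4),a_F(p^6))$ coincide with their $G$-counterparts.

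Finally I would apply Theorem~\ref{thm:tpr} according to its two cases. If $a_F(p)$ and $a_G(p)$ are not both zero, part~(i) forces the first triples to differ; if $a_F(p)=a_G(p)=0$, part~(ii) forces the second triples to differ. Either way this contradicts the previous step, so some $r\in\{0,1,\dots,6\}$ must satisfy $c_F(p^rI)\neq c_G(p^rI)$. I expect the delicate point to be the cancellation of $c$: the Fourier coefficients are vector-valued whereas the eigenvalues are scalars, so it is exactly the nonvanishing hypothesis that permits passing from the coefficient equalities to the eigenvalue equalities, and this is where that hypothesis is genuinely used.
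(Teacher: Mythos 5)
Your proposal is correct and follows essentially the same route as the paper: contradiction, conversion of coefficient equalities at $I$ into eigenvalue equalities via Corollary~\ref{cor:tpr_coeff} and the nonvanishing of $c_F(I)=c_G(I)$, then an appeal to Theorem~\ref{thm:tpr}. Your added remark about cancelling the nonzero vector-valued coefficient $c$ is a slightly more careful rendering of the paper's informal division by $c_F(I)$, but it is the same argument.
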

\begin{proof}
  We proceed by contradiction: suppose
  \begin{equation*}
    c_F(p^rI)=c_G(p^rI)\qquad\text{for }0\leq r\leq 6.
  \end{equation*}
  (In particular, $c_F(I)=c_G(I)\neq 0$.)

  By Corollary~\ref{cor:tpr_coeff}
  \begin{equation*}
    a_F(p^r)=\frac{c_{T(p^r)F}(I)}{c_F(I)}
    =\frac{c_{T(p^r)G}(I)}{c_G(I)}=a_G(p^r)
    \qquad\text{for }0\leq r\leq 6.
  \end{equation*}
  This contradicts Theorem~\ref{thm:tpr}.
\end{proof}

\begin{remark}
  The assumption that at least one of $c_F(I)$ and $c_G(I)$ is nonzero is
  essential to the proof.  It is likely that the $I$-th coefficient of
  any Siegel eigenform is nonzero, but there are no general results in this
  direction.  It has been conjectured that the first Fourier-Jacobi
  coefficient of a Siegel eigenform $F$ is nonzero, and it is conceivable that
  the ($n=1$, $r=0$)-th coefficient of a Jacobi eigenform is also nonzero,
  which would imply our condition $c_F(I)\neq 0$.

  A discussion of this issue features in Arakawa's work on the $L$-functions
  attached to Siegel eigenforms, where he also gives a construction of some
  eigenforms $F$ such that $c_F(I)\neq 0$, see~\cite[Section 4]{Arakawa}.
\end{remark}

\begin{corollary}
  Let $F$ and $G$ be as in Theorem~\ref{thm:fourier}.  There exists a matrix
  $S\in\cF(2)$ such that
  \begin{equation*}
    \det(S)\leq (3\log(N)+4)^{12}\qquad\text{and}\qquad
    c_F(S)\neq c_G(S).
  \end{equation*}
\end{corollary}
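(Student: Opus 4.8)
The plan is to deduce this from Theorem~\ref{thm:fourier} in the same way the corollary to Theorem~\ref{thm:tpr} follows from that theorem: all of the modular-forms content is already packaged in Theorem~\ref{thm:fourier}, and what remains is to exhibit a sufficiently small prime $p\equiv 3\pmod 4$ not dividing $N$. Granting such a prime, Theorem~\ref{thm:fourier} supplies an exponent $r\in\{0,1,\ldots,6\}$ with $c_F(p^rI)\neq c_G(p^rI)$, and I would simply take $S=p^rI\in\cF(2)$. Since $S=\diag(p^r,p^r)$ is symmetric, positive-definite and half-integral, it is a legitimate Fourier index, and $\det(S)=p^{2r}\leq p^{12}$. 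Hence a prime bound of the form $p\leq 3\log(N)+4$ gives at once $\det(S)\leq(3\log(N)+4)^{12}$, which is the asserted estimate.

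The substance of the proof is therefore the prime bound, an analogue of \cite[Theorem~1]{Ghitza} for the progression $3\pmod 4$. I would argue by contradiction: if every prime $q\equiv 3\pmod 4$ with $q\leq x$ divided $N$, then their product would divide $N$, forcing $\log(N)\geq\sum_{q\leq x,\,q\equiv 3\,(4)}\log q$. The right-hand side is the restricted Chebyshev function, asymptotic to $x/2$ but admitting an explicit lower bound of the form $x/3-O(1)$ valid for all $x\geq 1$; feeding this in contradicts the choice $x>3\log(N)+4$ and so produces a prime $q\equiv 3\pmod 4$ with $q\leq 3\log(N)+4$ not dividing $N$. The coefficient $3$, rather than the $2$ appearing in the corollary to Theorem~\ref{thm:tpr}, is the price of the congruence condition: the primes in this progression have density $1/2$, so the relevant primorial grows only like $e^{x/2}$, and the explicit constants in the available lower bound for the restricted Chebyshev function widen the coefficient of $\log(N)$ accordingly.

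The main obstacle is entirely analytic: pinning down an explicit, uniformly valid lower bound for the Chebyshev function of the progression $3\pmod 4$ that is sharp enough to yield exactly the constants $3$ and $4$ with no leftover slack. One must in particular check the small-$N$ cases, e.g.\ $N=1$, where the prime $q=3$ must satisfy $3\leq 3\log(1)+4=4$. This is routine elementary analytic number theory, and the Siegel-modular input is confined to Theorem~\ref{thm:fourier}.
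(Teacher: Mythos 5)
Your proposal is correct and follows essentially the same route as the paper: take $S=p^rI$ so that $\det(S)=p^{2r}\leq p^{12}$, and reduce everything to showing that the smallest prime $p\equiv 3\pmod 4$ not dividing $N$ satisfies $p\leq 3\log(N)+4$, which is proved by contradiction via the restricted Chebyshev function $\theta_3$. The analytic step you flag as the main obstacle is handled in the paper by a direct check for $1\leq N<40$ together with, for $N\geq 40$, the explicit Ramar\'e--Rumely estimate $|\theta_3(x)-x/2|\leq 0.16188\,x$ for $x\geq 11$, which yields $\theta_3(3x)>x$ for $x\geq 11/3$.
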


This follows from the following estimate:
\begin{proposition}
  Let $N\geq 1$ be an integer.  Let $p$ be the smallest prime $\equiv
  3\pmod{4}$, not dividing $N$.  Then
  \begin{equation*}
    p\leq 3\log(N)+4.
  \end{equation*}
\end{proposition}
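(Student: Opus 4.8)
The plan is to argue by contradiction through the product of the ``missed'' primes. Let $p$ be the smallest prime $\equiv 3\pmod 4$ not dividing $N$. Then every prime $q\equiv 3\pmod 4$ with $q<p$ divides $N$, so their product divides $N$, and taking logarithms gives
\[
  \sum_{\substack{q<p\\ q\equiv 3\ (\mathrm{mod}\ 4)}}\log q
  =\log\Bigl(\prod_{\substack{q<p\\ q\equiv 3\ (\mathrm{mod}\ 4)}} q\Bigr)\le \log N.
\]
Writing $q_1=3<q_2=7<q_3=11<\cdots$ for the increasing sequence of primes $\equiv 3\pmod 4$, we have $p=q_m$ for some $m$, and the displayed inequality reads $\sum_{i<m}\log q_i\le\log N$. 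Hence it suffices to establish the purely number-theoretic inequality
\[
  q_m\le 3\sum_{i=1}^{m-1}\log q_i+4\qquad\text{for every }m\ge 1,
\]
since combining it with $\sum_{i<m}\log q_i\le\log N$ yields $p=q_m\le 3\log N+4$.

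This reduced inequality is, up to lower-order terms, an effective lower bound on the Chebyshev function restricted to the progression $4k+3$: writing $\vartheta(x)=\sum_{q\le x,\ q\equiv 3\,(4)}\log q$, it asserts roughly $\vartheta(x)\ge\tfrac13 x$, whereas the prime number theorem for arithmetic progressions gives the far stronger $\vartheta(x)\sim\tfrac12 x$. Accordingly I would fix an explicit threshold $X_0$ and invoke a known effective estimate of the form $\vartheta(x)\ge c\,x$ with a constant $c>\tfrac13$ valid for all $x\ge X_0$, such bounds being available from explicit versions of the prime number theorem in progressions. Combined with the elementary fact that the ratio $q_m/q_{m-1}$ of consecutive primes in the progression tends to $1$, this makes $3\sum_{i<m}\log q_i+4\ge 3c\,q_{m-1}$ exceed $q_m$ with room to spare once $q_{m-1}\ge X_0$, settling all large $m$.

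For the finitely many remaining $m$ (those with $q_{m-1}<X_0$) I would verify the inequality by direct computation; the binding case is $m=2$, where one needs $q_2=7\le 3\log q_1+4=3\log 3+4=7.29\ldots$, and it is precisely this case that pins down the constants $3$ and $4$. The main obstacle is the analytic input of the previous paragraph. Unlike the unrestricted statement (compare the bound $p\le 2\log N+2$ used in Section~\ref{sect:tpr}, proved in~\cite[Theorem~1]{Ghitza} via central binomial coefficients), the elementary Chebyshev machinery based on $\binom{2n}{n}$ does not distinguish residue classes, so a genuine lower bound for $\vartheta(x)$ requires either an explicit prime-counting estimate in the progression drawn from the literature or a more delicate elementary argument; in either case one must control the ``Chebyshev bias'' governing how $\theta(x)=\sum_{q\le x}\log q$ splits between the classes $1$ and $3\pmod 4$.
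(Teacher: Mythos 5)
Your proposal is correct and follows essentially the same route as the paper: contradiction via the product of the primes $\equiv 3\pmod 4$ that divide $N$, reduction to an explicit lower bound for the Chebyshev function $\theta_3$ restricted to that progression, and a finite computation for small cases (the paper checks $N<40$; your binding case $7\le 3\log 3+4$ is the same tight spot). The analytic input you leave as a placeholder is supplied in the paper by the explicit estimates of Ramar\'e and Rumely, weakened to $|\theta_3(x)-x/2|\leq 0.16188x$ for $x\geq 11$, which gives $\theta_3(3x)>x$ for $x\geq 11/3$ and closes the argument.
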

\begin{proof}
  The cases $1\leq N< 40$ are settled by a quick case-by-case computation.

  So we can assume $N\geq 40$.  We proceed by contradiction: suppose $N$ is
  divisible by all primes $\equiv 3\pmod{4}$ that are less than or equal to
  $3\log(N)+4$.  Then
  \begin{equation*}
    N\geq \prod_{\substack{p\leq 3\log(N)+4\\p\equiv 3~(\text{mod }4)}} p,
  \end{equation*}
  so that
  \begin{equation}
    \label{eq:theta3}
    \log(N)\geq \sum_{\substack{p\leq 3\log(N)+4\\p\equiv 3~(\text{mod }4)}}
    \log(p)=\theta_3(3\log(N)+4)\geq\theta_3(3\log(N)),
  \end{equation}
  where $\theta_3$ denotes the following modification of Chebyshev's function:
  \begin{equation*}
    \theta_3(x)=\sum_{\substack{p\leq x\\p\equiv 3~(\text{mod }4)}} \log(p).
  \end{equation*}
  If $N\geq 40$ then $\log(N)\geq 11/3$, so by Lemma~\ref{lem:theta3} the right hand
  side of Equation~\eqref{eq:theta3} is $>\log(N)$, which is a contradiction.
\end{proof}

\begin{lemma}
  \label{lem:theta3}
  The function $\theta_3$ satisfies
  \begin{equation*}
    \theta_3(3x)>x\qquad\text{for all }x\geq\frac{11}{3}.
  \end{equation*}
\end{lemma}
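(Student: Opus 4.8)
The plan is to substitute $y = 3x$ and prove the equivalent statement $\theta_3(y) > y/3$ for all $y \geq 11$. The key structural observation is that $\theta_3$ is a step function, constant on each interval between consecutive primes $\equiv 3 \pmod 4$, while the competitor $y/3$ is strictly increasing. Hence on an interval $[p, p')$ determined by consecutive primes $p < p'$ with $p \equiv p' \equiv 3 \pmod 4$, the inequality $\theta_3(y) > y/3$ holds throughout if and only if it holds in the limit $y \to p'^{-}$, i.e. if and only if $\theta_3(p) \geq p'/3$. This reduces the entire problem to an asymptotic estimate valid beyond some explicit threshold $y_0$, together with a finite verification of the inequalities $\theta_3(p) \geq p'/3$ for the sparse list of primes below $y_0$.

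For the asymptotic regime I would exploit the near-equidistribution of primes modulo $4$. Writing $\chi$ for the nontrivial Dirichlet character modulo $4$ and $\psi_\chi(y) = \sum_{p \leq y} \chi(p) \log p$, one has the exact decomposition
\[
  \theta_3(y) = \tfrac{1}{2}\bigl(\theta(y) - \log 2 - \psi_\chi(y)\bigr),
\]
where $\theta$ is the usual Chebyshev function, since $\theta(y) = \log 2 + \theta_1(y) + \theta_3(y)$ and $\psi_\chi(y) = \theta_1(y) - \theta_3(y)$. The main term $\theta(y)$ admits explicit lower bounds $\theta(y) > c_1 y$ with $c_1$ close to $1$ (Rosser--Schoenfeld, or the sharper estimates of Dusart), while the oscillating term is controlled by an explicit form of the prime number theorem in arithmetic progressions, for instance the tables of Ramar\'e--Rumely, giving $|\psi_\chi(y)| < c_2 y$ with $c_2$ small. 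Feeding both into the identity yields $\theta_3(y) > \tfrac{1}{2}(c_1 - c_2)y - \tfrac12\log 2$, which exceeds $y/3$ once $y \geq y_0$.

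It then remains to dispatch the range $11 \leq y \leq y_0$, which by the first paragraph is the finite task of checking $\theta_3(p) \geq p'/3$ for consecutive primes $p < p' \equiv 3 \pmod 4$ up to $y_0$, a direct numerical computation. I expect the genuine difficulty to lie precisely here, at the lower end of the range rather than in the large-$y$ asymptotics: the jumps of $\theta_3$ are widely spaced at small heights, so the flat stretches between $3, 7, 11, 19, 23, \ldots$ are long and the margin $\theta_3(p) - p'/3$ is delicate; it is this boundary behaviour that governs the admissible threshold in the statement. Consequently the crux of the argument is to sharpen the two explicit constants $c_1$ and $c_2$ enough to keep $y_0$ small, so that the finite verification remains both tractable and honest.
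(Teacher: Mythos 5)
Your reduction is correct and, in fact, more careful than the paper's own argument: on an interval $[p,p')$ between consecutive primes $p<p'$ that are $\equiv 3 \pmod 4$, the inequality $\theta_3(y)>y/3$ holds throughout if and only if $\theta_3(p)\geq p'/3$, and the large-$y$ regime is handled by explicit prime-counting estimates. The genuine gap is that you defer the finite verification, and it is not a formality --- it fails at the very first interval in the relevant range, precisely at the ``long flat stretch'' you flagged. For $(p,p')=(11,19)$ one has
\begin{equation*}
  \theta_3(11)=\log 3+\log 7+\log 11=\log 231\approx 5.442 < \tfrac{19}{3}\approx 6.333,
\end{equation*}
so $\theta_3(y)>y/3$ fails for $y\in[\log 231\cdot 3/3\,\cdot\,3,19)$, i.e.\ $\theta_3(3x)>x$ fails for all $x\in[\log 231,\,19/3)$; for instance $x=6\geq 11/3$ gives $\theta_3(18)=\log 231\approx 5.442\not>6$. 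In other words, the lemma as stated is false, and your method, carried through honestly, is exactly what exposes this.

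For comparison, the paper's proof breaks at the same spot: the claimed weakened estimate $|\theta_3(x)-x/2|\leq 0.16188x$ for $x\geq 11$ does not follow from the Ramar\'e--Rumely bound $1.780719\sqrt{x}$ until $x\geq 121$, and it is simply false for $x$ slightly below $19$ (there $|\theta_3(x)-x/2|\approx 4.05$ while $0.16188x\approx 3.07$). A correct version of the lemma needs a larger threshold, e.g.\ $x\geq 19/3$: one checks $\theta_3(p)\geq p'/3$ along $19,23,31,43,47,\ldots$ (the tightest case being $\theta_3(31)\approx 14.96$ versus $43/3\approx 14.33$) until the explicit estimate $\theta_3(y)\geq y/2-1.780719\sqrt{y}>y/3$ takes over around $y\geq 115$. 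The preceding Proposition must then be repaired by enlarging the range of $N$ treated by direct computation (or by not discarding the ``$+4$'' in $\theta_3(3\log N+4)$). Your write-up should either carry out the finite check and report the counterexample, or prove the corrected statement; as it stands the decisive step is missing.
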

\begin{proof}
  Ramar\'e and Rumely give the following explicit estimate for $\theta_3$
  (see~\cite[Theorems 1 and 2]{RamareRumely}):
  \begin{equation*}
    |\theta_3(x)-x/2|\leq\begin{cases}
      0.001119x & \text{for }x\geq 10^{10}\\
      1.780719\sqrt{x} & \text{for }x< 10^{10}.
    \end{cases}
  \end{equation*}
  We can lower the bound $10^{10}$ at the expense of a weaker estimate:
  \begin{equation*}
    |\theta_3(x)-x/2|\leq\begin{cases}
      0.16188x & \text{for }x\geq 11\\
      1.780719\sqrt{x} & \text{for }x< 11.
    \end{cases}
  \end{equation*}
  So
  \begin{equation*}
    \theta_3(3x)\geq \frac{3x}{2}-0.16188\cdot 3x\cong 1.014x>x
    \qquad\text{for }3x\geq 11.
  \end{equation*}
\end{proof}

\bibliographystyle{plain}
\bibliography{rallis}
\end{document}